\newcommand{\ud}{\mathrm{d}}
\newcommand{\ii}{\mathrm{i}}
\newcommand{\cH}{\mathcal{H}}
\theoremstyle{plain}
\newtheorem{theorem}{Theorem}[section]
\newtheorem{lemma}[theorem]{Lemma}
\newtheorem{corollary}[theorem]{Corollary}
\newtheorem{proposition}[theorem]{Proposition}
\theoremstyle{definition}
\newtheorem{remark}[theorem]{Remark}
\newtheorem*{remark*}{Remark}
\numberwithin{equation}{section}
\begin{document}

\title[On geometric quantum confinement in Grushin-type manifolds]
{On geometric quantum confinement in Grushin-type manifolds}
\author[M.~Gallone]{Matteo Gallone}
\address[M.~Gallone]{International School for Advanced Studies -- SISSA \\ via Bonomea 265 \\ 34136 Trieste (Italy).}
\email{mgallone@sissa.it}
\author[A.~Michelangeli]{Alessandro Michelangeli}
\address[A.~Michelangeli]{International School for Advanced Studies -- SISSA \\ via Bonomea 265 \\ 34136 Trieste (Italy).}
\email{alemiche@sissa.it}
\author[E.~Pozzoli]{Eugenio Pozzoli}
\address[E.~Pozzoli]{Sorbonne Universit\'e, Universit\'e Paris-Diderot SPC, CNRS, INRIA, 
Laboratoire Jacques-Louis Lions, \'equipe Cage, F-75005 Paris (France).}
\email{eugenio.pozzoli@inria.fr}


\begin{abstract}
We study the problem of so-called geometric quantum confinement in a class of two-dimensional incomplete Riemannian manifold with metric of Grushin type. We employ a constant-fibre direct integral scheme, in combination with Weyl's analysis in each fibre, thus fully characterising the regimes of presence and absence of essential self-adjointness of the associated Laplace-Beltrami operator. 
\end{abstract}

\date{\today}

\subjclass[2000]{}
\keywords{Geometric quantum confinement; Grushin manifold; geodesically (in)complete Riemannian manifold; Laplace-Beltrami operator; almost-Riemannian structure; self-adjoint operators in Hilbert space; Weyl's limit-point limit-circle criterion; constant-fibre direct integral}

%

\maketitle


\section{Introduction. Geometric quantum confinement.}
\label{sec:intro}

The notion of confinement for Schr\"{o}dinger's evolution refers, in a wide generality, to the feature that a solution to Schr\"{o}dinger's equation remains localised in an appropriate sense within a fixed spatial region, uniformly for all times. In the applications this is naturally referred to also as `quantum' confinement, a jargon that we are going to refine in a moment.

As well known, the precise features of quantum confinement are determined by the evolutive properties of the `free' Schr\"{o}dinger Hamiltonian in combination with the properties of the additional confining potential and of the underlying geometry of the space.

One of the most typical and relevant setting concerns a non-relativistic quantum particle moving on an orientable Riemannian manifold $(M,g)$ of dimension $d\in\mathbb{N}$ equipped with a smooth measure $\mu$, conventionally the measure $\mu_g:=\mathrm{vol}_g$ associated with the Riemannian volume form: the Hilbert space for the system is $L^2(M,\ud\mu)$ and the Schr\"{o}dinger Hamiltonian of interest is a self-adjoint realisation of the operator $H:=-\Delta_\mu+V$, where $\Delta_\mu=\mathrm{div}_\mu\circ\nabla$ is the Laplace-Beltrami operator computed with respect to the measure $\mu$ and $V$ is a real-valued potential on $M$.

In this setting the problem of quantum confinement is posed and interpreted as follows. The manifold $M\subset\mathbb{R}^d$ is chosen to be precisely the open spatial region which one wants to inquire whether the quantum particle remains confined in, and the operator $H$ is initially defined on the minimal domain $C^\infty_c(M)$, the dense subspace of $L^2(M,\ud\mu)$ of smooth functions with compact support. If such a choice does \emph{not} make $H$ essentially self-adjoint and hence leaves room for a multiplicity of distinct self-adjoint extensions of $H$, then the domain of each extension $\widetilde{H}$ is qualified by suitable boundary conditions of self-adjointness, and Schr\"{o}dinger's unitary flow $e^{-\ii t \widetilde{H}}$ evolves the quantum particle's wave-function so as to reach the boundary $\partial M$ in finite time, which is interpreted as a \emph{lack of confinement}. This is natural if one thinks of boundary conditions as describing a `physical interaction' of the boundary with the interior: the need for such an interaction, as a condition to make the Hamiltonian self-adjoint and hence to make the evolved wave function $e^{-\ii t \widetilde{H}}\psi_0$ belong to $L^2(M,\ud\mu)$ for all times for initial $\psi_0$ in the domain of $\widetilde{H}$, is the opposite of `confinement in $M$ without confining boundaries'. For this reason, if on the other hand $H$ is essentially self-adjoint on $C^\infty_c(M)$, then it is natural to say that the dynamics generated by its closure $\overline{H}$ exhibits quantum confinement in $M$: no quantum information escapes from $M$.

Thus, for example, the well-known fact that the ordinary Laplacian $H=-\frac{\ud^2}{\ud x^2}$ on the interval $M=(0,1)$ with the induced Euclidean metric, defined initially on $C^\infty_c(0,1)$, admits a four-real-parameter family of self-adjoint extensions in $L^2((0,1),\ud x)$, each of which is characterised by a linear relation between the values of the function and of its derivative at $x=0$ and the analogous values at $x=1$, is interpreted by saying that a quantum particle moving freely in the interval $(0,1)$  remains within that interval thanks to the appropriate boundary conditions, hence there is no `natural' quantum confinement. Pictorially, if an initial smooth function $\psi_0$ supported in $(0,1)$ was evolved instead as an element of the Hilbert space $L^2(\mathbb{R})$ subject to the dynamics generated by the closure of the essentially self-adjoint Laplacian $H=-\frac{\ud^2}{\ud x^2}$ with domain $C^\infty_c(\mathbb{R})$, then $e^{-\ii t\overline{H}}\psi_0$ would `exit' from the interval $(0,1)$ at later times $t>0$.

In short, the issue of the quantum confinement within the manifold $M$ is the issue of the essential self-adjointness of the operator $-\Delta_\mu+V$ with domain $C^\infty_c(M)$ in the Hilbert space $L^2(M,\ud\mu)$.

The case of smooth and \emph{geodesically complete} Riemannian manifolds is relatively well-understood: in this case the essential self-adjointness of $-\Delta_\mu$ is by now a classical result \cite{Gaffney-1954} and fairly general sufficient conditions on $V$ are known to ensure the essential self-adjointness of $-\Delta_\mu+V$ \cite{Braverman-Milatovich-Shubin-2002}. Most understood is the special, fundamental case $M=\mathbb{R}^d$: a whole industry was built in understanding the self-adjointness of Schr\"{o}dinger's operators on $d$-dimensional Euclidean space, with a vast and by now classical literature -- see, e.g., \cite[Chapter X]{rs2} or \cite[Chapter 1]{Cycon-F-K-S-Schroedinger_ops}.

For \emph{incomplete} Riemannian manifolds the picture is less developed, yet fairly general classes of $V$'s are known which ensure the self-adjointness of Schr\"{o}dinger's operators on bounded domains of $\mathbb{R}^d$ with smooth boundary of co-dimension 1 \cite{Nenciu-Nenciu-2009}, or more generally on bounded domains of $\mathbb{R}^d$ with non-empty boundary \cite{Ward-2016}. In such cases it is fundamental for the essential self-adjointness of $-\Delta+V$, where $\Delta$ is now the Euclidean Laplacian, that $V$ blows up at the boundary of the considered domain.

Recently, quantum confinement within incomplete Riemannian manifolds has attracted considerable attention especially when the measure has degeneracies or singularities near the metric boundary  \cite{Boscain-Laurent-2013,Prandi-Rizzi-Seri-2016,Franceschi-Prandi-Rizzi-2017}. Such setting is intimately related with that of manifolds equipped with so-called \emph{almost-Riemannian structure} \cite{Agrachev-Boscain-Sigalotti-2008}, a notion that informally speaking refers to a smooth $d$-dimensional  manifold $\mathcal{M}$ equipped with a family of smooth vector fields $X_1,...,X_d$ satisfying the Lie bracket generating condition: if $\mathcal{Z} \subset\mathcal{M}$ is the embedded hyper-surface of points where the $X_j$'s are not linearly independent, on $\mathcal{M}\setminus \mathcal{Z}$ the fields $X_1,\dots,X_d$ define a Riemannian structure which however becomes singular on $\mathcal{Z}$.
%
%
%
When, for concreteness, $\mathcal{Z}$ partitions $\mathcal{M}$ into two regions $M_1$ and $M_2$, so that $\mathcal{M}=M_1\cup\mathcal{Z}\cup M_2$ and $M:=M_1\cup M_2$ as disjoint unions, the essential self-adjointness of $H=-\Delta_{\mu_g}+V$ on $L^2(M,\ud\mu_g)$ with domain $C^\infty_c(M)$ is interpreted, from the perspective of quantum confinement, as the impossibility that a function initially supported only on $M_1$ evolves \emph{across} $\mathcal{Z}$ so as to become supported also in $M_2$ in the course of the Schr\"{o}dinger dynamics generated by $\overline{H}$.

For Schr\"{o}dinger operators on incomplete Riemannian manifolds with singular measure near the metric boundary, sufficient conditions of (essential) self-adjointness, including curvature-based criteria, have been recently established in \cite{Boscain-Laurent-2013,Prandi-Rizzi-Seri-2016} -- we are going to comment further on such results in due time. In this context, a special focus is given to the case $V\equiv 0$, that is, a quantum particle not subject to external interaction: in this case quantum confinement, when it occurs, is then \emph{purely geometric}.

Geometric quantum confinement on an incomplete Riemannian manifold $(M,g)$ is of particular interest from one further perspective, owing to the sharp difference between the corresponding classical and quantum motion. In the former, since geodesics represent the classical trajectories, the classical particle does reach the boundary $\partial M$ in \emph{finite time}, whereas in the latter, because of the essential self-adjointness of $-\Delta_g$ with domain $C^\infty_c(M)$, \emph{for all times} the quantum particle's wave-function need not be qualified by boundary conditions at $\partial M$ -- pictorially, the quantum particle stays permanently away from $\partial M$.

In view of the above discussion, we are now ready to present our work.

We are primarily concerned here with characterising the \emph{occurrence} as well as the \emph{absence} of geometric quantum confinement in a concrete class of two-dimensional incomplete Riemannian manifolds, the so-called \emph{Grushin-type} manifolds. The prototypical example is the half-plane $M=\mathbb{R}^+\times\mathbb{R}$ equipped with metric $g=\ud x\otimes\ud x+x^{-2}\ud y\otimes \ud y=\ud x^2+x^{-2}\ud y^2$, where $(x,y)\in \mathbb{R}^+\times\mathbb{R}$.

As we are going to explain in detail, the main features of our study, also with respect to the previous recent studies of the same or of analogous problems, are:
\begin{itemize}
 \item the novelty, and conciseness, of the approach, based on an analysis of constant-fibre direct integral on Hilbert space in combination with Weyl's limit-point limit-circle argument for each fibre;
 \item the formulation of \emph{necessary and sufficient} conditions for the essential self-adjointness of the considered Laplace-Beltrami operators -- the previous works \cite{Boscain-Laurent-2013,Prandi-Rizzi-Seri-2016} (see also \cite[Remark 1.1]{Franceschi-Prandi-Rizzi-2017}) 
 only provided sufficient conditions for quantum confinement, without characterising the absence of it;
 \item the possibility of covering the `non-compact' case, as in the above example $(M,g)=(\mathbb{R}^+\times\mathbb{R},\ud x^2+x^{-2}\ud y^2)$ (for which, as said, previously only sufficient conditions of quantum confinement were found \cite{Boscain-Laurent-2013,Prandi-Rizzi-Seri-2016,Franceschi-Prandi-Rizzi-2017}),  
 thus generalising the analysis of \cite[Sect.~4]{Boscain-Laurent-2013} and of \cite{Boscain-Prandi-JDE-2016} made for the counterpart case $M=\mathbb{R}^+_x\times\mathbbm{T}_y$, where the $y$-variable was compactified;
 \item some degree of `robustness' of our approach, as we can immediately export it to a fairly general class of Grushin-type manifolds, say, the half-plane $M=\mathbb{R}^+_x\times\mathbb{R}_y$ with metric $ g=\ud x^2+F(x)\ud y^2$ for suitable functions $F$, thus simplifying the effective potential approach of \cite{Prandi-Rizzi-Seri-2016} based on Agmon-type estimates.
\end{itemize}

Besides, we shall also recognise that in the absence of essential self-adjointness, the considered Laplace-Beltrami operator has \emph{infinite} deficiency index. This rises the question of classifying and studying the vast multiplicity of self-adjoint extensions in relation to the behaviour at the boundary, especially the physically relevant ones characterised by \emph{local} boundary conditions, interpreting each self-adjoint realisation as a different mechanism how the quantum particle tends to `cross' the boundary itself. We intend to treat such an analysis in a forthcoming follow-up work.

\section{Setting of the problem and main results}

We consider the family $\{M_\alpha\equiv(M,g_\alpha)\,|\,\alpha\in[0,+\infty)\}$ of Riemannian manifolds defined by
\begin{equation}\label{eq:Mgalpha}
 \begin{split}
 M\;&:=\;\{(x,y)\,|\,x\in\mathbb{R}^+,\,y\in\mathbb{R}\} \\
 g_\alpha\;&:=\;\ud x\otimes\ud x+\frac{1}{x^{2\alpha}}\,\ud y\otimes\ud y\;=\;\ud x^2+\frac{1}{x^{2\alpha}}\,\ud y^2\,.
 \end{split}
\end{equation}
The value $\alpha=1$ selects the standard example of two-dimensional \emph{Grushin manifold} \cite[Chapter 11]{Calin-Chang-SubRiemannianGeometry}, or \emph{Grushin plane}, and all other members of the above family, as well as of the even larger family defined in \eqref{eq:Mf} below, will be generically referred to as (two-dimensional) \emph{Grushin-type manifolds}. The value $\alpha=0$ selects the Euclidean half-plane.

A straightforward computation \cite{Agrachev-Boscain-Sigalotti-2008,Boscain-Laurent-2013,Pozzoli_MSc2018} shows that the Gaussian (sectional) curvature $K_\alpha$ of $M_\alpha$ is
\begin{equation}
 K_\alpha(x,y)\;=\;-\frac{\,\alpha(\alpha+1)\,}{x^2}\,,
\end{equation}
hence $M_\alpha$ is a hyperbolic manifold whenever $\alpha>0$.

Each $M_\alpha$ is clearly parallelizable, a global orthonormal frame being
\begin{equation}\label{eq:frame}
\{X_1,X_2^{(\alpha)}\}\;=\;\left\{ 
\begin{pmatrix}
 1 \\ 0
\end{pmatrix},\;
\begin{pmatrix}
 0 \\ x^{\alpha}
\end{pmatrix}
\right\}\equiv\;\Big\{\frac{\partial}{\partial x},x^{\alpha}\frac{\partial}{\partial y}\Big\}.
\end{equation}

\begin{remark}\label{rem:ARS-noARS}
Upon extending $\{X_1,X_2^{(\alpha)}\}$ to the whole $\mathbb{R}^2$ with $X_2^{(\alpha)}:=\begin{pmatrix}
 0 \\ |x|^{\alpha}
\end{pmatrix}$
%
%
and defining
\begin{equation}
 \begin{split}
  M^+\;&:=\;M\,,\qquad M^-\;:=\{(x,y)\,|\,x\in\mathbb{R}^-,\,y\in\mathbb{R}\}\,, \\
  \mathcal{Z}\;&:=\;\{(0,y)\,|\,y\in\mathbb{R}\}\,,
 \end{split}
\end{equation}
one has now the Lie bracket $[X_1,X_2^{(\alpha)}]=\begin{pmatrix}
 0 \\ \alpha|x|^{\alpha-1}
\end{pmatrix}$.
Thus, if $\alpha=1$ the fields $X_1,X_2^{(\alpha)}$ define an \emph{almost-Riemannian structure} on $\mathbb{R}^2=M^+\cup\mathcal{Z}\cup M^-$, following the notation used in the Introduction, for a rigorous definition of which we refer, e.g., to \cite[Sec.~1]{Agrachev-Boscain-Sigalotti-2008} or \cite[Sect.~7.1]{Prandi-Rizzi-Seri-2016}: indeed the Lie bracket generating condition 
\begin{equation}
 \dim\mathrm{Lie}_{(x,y)}\,\mathrm{span}\{X_1,X_2^{(\alpha)}\}\;=\;2\qquad\forall(x,y)\in\mathbb{R}^2\,,
\end{equation}
is satisfied in this case. For $\alpha\in(0,1)$ the field $X_2^{(\alpha)}$ is \emph{not} smooth, which prevents $X_1,X_2^{(\alpha)}$ to define an almost-Riemannian structure. However, on $\mathbb{R}^2\!\setminus\!\mathcal{Z}$ the fields $X_1,X_2^{(\alpha)}$ do define a Riemannian structure for every $\alpha\geqslant 0$ given by
\begin{equation}\label{eq:galphaeverywhere}
  g_\alpha\;:=\;\ud x\otimes\ud x+\frac{1}{|x|^{2\alpha}}\,\ud y\otimes\ud y\,.
\end{equation}
\end{remark}


To each $M_\alpha$ one naturally associates the Riemannian volume form
\begin{equation}\label{eq:volumeform}
 \mu_\alpha\;:=\;\mathrm{vol}_{g_\alpha}\;=\;\sqrt{\det g_\alpha}\,\ud x\wedge\ud y\;=\;x^{-\alpha}\,\ud x\wedge\ud y\,.
\end{equation}
By means of \eqref{eq:frame} and \eqref{eq:volumeform} one computes
\begin{equation}
 \begin{split}
  X_1^2\;&=\;\frac{\partial^2}{\partial x^2}\,,\qquad \mathrm{div}_{\mu_\alpha}X_1\;=\;-\frac{\alpha}{x}\,, \\
  (X_2^{(\alpha)})^2\;&=\;x^{2\alpha}\frac{\partial^2}{\partial y^2}\,,\quad \mathrm{div}_{\mu_\alpha}X_2^{(\alpha)}\;=\;0\,,
 \end{split}
\end{equation}
whence
\begin{equation}\label{eq:Deltamualpha}
\begin{split}
 \Delta_{\mu_\alpha}\;&=\;\mathrm{div}_{\mu_\alpha}\circ\nabla \\
 &=\;X_1^2+X_2^2+(\mathrm{div}_{\mu_\alpha}X_1)X_1+(\mathrm{div}_{\mu_\alpha}X_2^{(\alpha)})X_2^{(\alpha)} \\
 &=\;\frac{\partial^2}{\partial x^2}+x^{2\alpha}\frac{\partial^2}{\partial y^2}-\frac{\alpha}{x}\,\frac{\partial}{\partial x}\,,
\end{split}
\end{equation}
which is the (Riemannian) Laplace-Beltrami operator on $M_\alpha$.

Before entering the core of our analysis let us establish a preliminary property that is folk knowledge to some extent: we state it here for the benefit of the reader.

\begin{theorem}\label{thm:incompleteness}
 Let $\alpha>0$. All geodesics passing through a generic point $(x_0,y_0)\in M$ escape from $M_\alpha$.
\end{theorem}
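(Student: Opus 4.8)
The plan is to integrate the geodesic flow explicitly, exploiting the Killing field $\partial/\partial y$ of the metric $g_\alpha$. Parametrise a nonconstant geodesic $\gamma(t)=(x(t),y(t))$ of $M_\alpha$ by arc length. Since $\partial/\partial y$ is a Killing vector field, the momentum conjugate to $y$ in the geodesic Lagrangian $L=\tfrac12\big(\dot x^2+x^{-2\alpha}\dot y^2\big)$, namely $c:=x^{-2\alpha}\dot y$, is conserved along $\gamma$; inserting this into the unit-speed relation $\dot x^2+x^{-2\alpha}\dot y^2=1$ yields the single first-order identity
\[
  \dot x^2\;=\;1-c^2\, x^{2\alpha}\,,
\]
whence, differentiating once (equivalently, reading off the $x$-component of the Euler--Lagrange equations), $\ddot x=-\alpha c^2 x^{2\alpha-1}\leqslant 0$ throughout $M$. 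Thus along $\gamma$ the coordinate $x$ obeys $|\dot x|\leqslant 1$, the function $t\mapsto x(t)$ is concave, and $x$ evolves like a one-dimensional particle of energy $1$ in the increasing effective potential $U(x)=c^2 x^{2\alpha}$ --- this is where $\alpha>0$ enters --- while $|\dot y|=|c|\, x^{2\alpha}$.

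Next I would run an elementary phase-line analysis. The coordinate $x$ cannot be constant along $\gamma$: $\dot x\equiv 0$ would force $\ddot x\equiv 0$, hence $c=0$, which contradicts the unit-speed relation. Now a nonconstant concave function on all of $\mathbb{R}$ is unbounded below, so $x$ cannot stay positive on the whole real line; hence the maximal interval $(a,b)$ on which $\gamma$ is defined in $M$ has a finite endpoint, and after possibly reversing the parameter we may assume $b<+\infty$. On an interval $[t_0,b)$ the derivative $\dot x$ is non-increasing, so $x$ grows at most linearly and stays bounded, and then so does $y$ because $|\dot y|=|c|\, x^{2\alpha}$. If in addition $x$ stayed bounded away from $0$ on $[t_0,b)$, the image $\gamma([t_0,b))$ would lie in a compact subset of $M$, which is impossible for an inextensible geodesic with $b<+\infty$. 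Therefore $\liminf_{t\uparrow b}x(t)=0$, and since $\dot x$ is monotone, in fact $x(t)\to 0$ as $t\uparrow b$; along the way $\dot x^2=1-c^2 x^{2\alpha}\to 1$, so $\gamma$ reaches the metric boundary $\partial M=\{x=0\}$ transversally and at the finite arc-length value $b$. Equivalently, when $c\neq 0$ the arc length separating $(x_0,y_0)$ from the boundary equals the convergent integral $\int_0^{x_0}(1-c^2 s^{2\alpha})^{-1/2}\,\ud s$ (with at most an integrable $(x_*-s)^{-1/2}$ singularity at a turning point $x_*=|c|^{-1/\alpha}=x_0$), while for $c=0$ one simply has $x(t)=x_0\pm t$. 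Since the argument is uniform in the base point $(x_0,y_0)\in M$ and in the initial direction (encoded by $c$ together with the sign of $\dot x_0$), the conclusion holds through every point of $M$.

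\textbf{Main obstacle.} The one genuinely delicate point is the last dichotomy: verifying that $x(t)$ really does reach $0$ \emph{in finite parameter time} rather than merely asymptotically, and that this happens whatever the sign of the initial radial velocity --- in particular for geodesics that first climb up to the turning point $x_*$ before descending. This is precisely what the concavity bound $\ddot x\leqslant 0$ secures, in tandem with the a priori bounds $|\dot x|\leqslant 1$ and $|\dot y|\leqslant |c|\, x_*^{2\alpha}$, which rule out escape to $x\to+\infty$ or $|y|\to\infty$ in finite time and thus leave $x\to 0$ as the only way a maximal geodesic in $M_\alpha$ can terminate.
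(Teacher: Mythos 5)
Your proof is correct, and it reaches the conclusion by a route that differs from the paper's in its final step. Both arguments start identically: the translational symmetry in $y$ gives the conserved momentum ($c=x^{-2\alpha}\dot y$, the paper's $P_y=\sin\theta$ after normalisation), and energy conservation reduces everything to the one-dimensional relation $\dot x^2=1-c^2x^{2\alpha}$. The paper then finishes \emph{quantitatively}: it separates variables, writes the travel time to the boundary as the improper integral $\int_0^{x_c}(1-\sin^2\theta\,x^{2\alpha})^{-1/2}\,\ud x$ (splitting the cases $\cos\theta\leqslant 0$ and $\cos\theta>0$ around the turning point $x_c=|\sin\theta|^{-1/\alpha}$), and checks convergence by explicit substitution. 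You instead finish \emph{qualitatively}: $\ddot x=-\alpha c^2x^{2\alpha-1}\leqslant 0$ makes $x(t)$ concave, a nonconstant positive concave function cannot live on all of $\mathbb{R}$, and the escape lemma for inextensible geodesics forces $x\to 0$ at the finite endpoint. This trades the explicit time formula for a softer ODE argument; what you lose is the quantitative information the paper also extracts (the explicit values of $t_\pm$, and the fact — stated in the paper's surrounding discussion — that every geodesic with $c\neq 0$ reaches the boundary at finite times in \emph{both} time directions, $t_-<0<t_+$; your concavity-plus-boundedness argument would in fact yield this two-sided statement too, since $x\leqslant x_*=|c|^{-1/\alpha}$ rules out an infinite endpoint, but you only prove escape in one direction, which is all the theorem as stated requires). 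One small imprecision in your closing parenthetical: the turning point $x_*=|c|^{-1/\alpha}$ coincides with $x_0$ only when $\dot x(0)=0$, and for $\dot x(0)>0$ the arc length to the boundary is $\int_{x_0}^{x_*}+\int_0^{x_*}$ rather than $\int_0^{x_0}$; this does not affect your main argument, which never uses that formula.
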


Theorem \ref{thm:incompleteness} will be proved and discussed in Section \ref{sec:incompleteness}, and for $\alpha=1$ it can be found already, e.g., in \cite[Sect.~11.2]{Calin-Chang-SubRiemannianGeometry} or \cite[Sect.~3.1]{Boscain-Laurent-2013}: it clearly implies the geodesic incompleteness of all the $M_\alpha$'s, a feature that is already evident by observing that $y=y_0$ is a geodesic line.

Next, in the Hilbert space
\begin{equation}
 \cH_\alpha\;:=\;L^2(M,\ud\mu_\alpha)\,,
\end{equation}
understood as the completion of $C^\infty_c(M)$ with respect to the scalar product
\begin{equation}
 \langle \psi,\varphi\rangle_{\alpha}\;:=\;\iint_{\mathbb{R}^+\times\mathbb{R}}\overline{\psi(x,y)}\,\varphi(x,y)\,\frac{1}{x^{\alpha}}\,\ud x\,\ud y\,,
\end{equation}
we consider the `\emph{minimal}' free Hamiltonian
\begin{equation}\label{Halpha}
 H_\alpha\;:=\;-\Delta_{\mu_\alpha}\,,\qquad\mathcal{D}(H_\alpha)\;:=\;C^\infty_c(M)\,,
\end{equation}
which is a densely defined, symmetric, lower semi-bounded operator (symmetry in particular follows from Green's identity).

Our main question then becomes for which $\alpha$'s the operator $H_\alpha$ is or is not essentially self-adjoint with respect to the Hilbert space $\cH_\alpha$, and hence for which $\alpha$'s one has or has not purely geometric quantum confinement in the manifold $M_\alpha$.

As mentioned already, the study of this problem has precursors in the literature. The essential self-adjointness of $H_\alpha$ for $\alpha\geqslant 1$ is proved with several related approaches in 
\cite{Boscain-Laurent-2013,Prandi-Rizzi-Seri-2016,Franceschi-Prandi-Rizzi-2017}. In particular, \cite{Boscain-Laurent-2013} is eminently perturbative in nature, whereas the completeness criterion \cite[Theorem 3.1]{Prandi-Rizzi-Seri-2016} exploits an approach of `effective potential', an intrinsic function depending only on the Riemannian structure of the manifold, which in the present case amounts to
\begin{equation}\label{eq:Veff}
 V_\mathrm{eff}(x,y)\;=\;\frac{\,\alpha(\alpha+2)\,}{4x^2}\,.
\end{equation}
When $\alpha\geqslant 1$, by means of \eqref{eq:Veff} and Hardy's inequality it is possible to express the lower-semiboundedness of the quadratic form of $H_\alpha$ as an Agmon-type estimate, which in turn allows one to deduce that the eigenfunction problem $H_\alpha^*\psi=E\psi$ for sufficiently negative $E$ can be only solved by $\psi=0$, a typical signature of essential self-adjointness for $H_\alpha$. 
In this respect, \cite[Theorem 3.1]{Prandi-Rizzi-Seri-2016} does not exclude that in the regime $\alpha\in[0,1)$ the essential self-adjointness could still hold.

From a closely related perspective, we also mention the analysis of \cite[Sect.~3.2]{Boscain-Laurent-2013} and of \cite{Boscain-Prandi-JDE-2016} on the quantum confinement problem for a \emph{compactified} version of $M_\alpha$, the manifold
\begin{equation}\label{eq:MtildeBoscain}
 \widetilde{M}_\alpha\:\equiv\:(\widetilde{M},g_\alpha)\qquad\textrm{with}\qquad \widetilde{M}\;:=\;\{(x,y)\,|\,x\in\mathbb{R}^+,\,y\in\mathbb{T}\}\,.
\end{equation}
In this case it is possible to exploit the compactness of the torus $\mathbb{T}$ in such a way to pass, through a Fourier transform in $y$, to a setting of infinite-orthogonal-sum Hilbert space, which allows one to qualify the presence or absence of essential self-adjointness of the associated Laplace-Beltrami in terms of an auxiliary problem on the half-line space $L^2(\mathbb{R}^+\,\ud x)$, and for the latter the classical limit-point/limit-circle analysis of Weyl does the job. As our approach in practice generalises this idea from infinite orthogonal sums to constant-fibre direct integrals, so as to deal with a \emph{non-compact} $y$-variable, we shall comment further on this point in Section \ref{sec:additional_remarks} below.

We characterise the essential self-adjointness of the operator \eqref{Halpha} by means of an alternative method that allows us to solve the problem for all $\alpha$'s, with no need to simplify it with a compactified version of the Grushin plane, and in a way that to our taste clarifies the \emph{operator-theoretic mechanism} for self-adjointness. Our main results read as follows.

\begin{theorem}\label{thm:no_confinement}
 If $\alpha\in[0,1)$, then the operator $H_\alpha$ is not essentially self-adjoint and therefore there is no geometric quantum confinement in the Grushin plane $M_\alpha$.
\end{theorem}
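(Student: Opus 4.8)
The plan is to diagonalise $H_\alpha$ along the translation-invariant $y$-direction and then apply Weyl's limit-point/limit-circle criterion on each one-dimensional fibre. First I would pass to the unitarily equivalent picture via $U_\alpha\colon\cH_\alpha\to L^2(\mathbb{R}^+\times\mathbb{R},\ud x\,\ud y)$, $(U_\alpha\psi)(x,y):=x^{-\alpha/2}\psi(x,y)$, which removes the measure weight and conjugates $-\Delta_{\mu_\alpha}$ into $-\partial_x^2-x^{2\alpha}\partial_y^2+\frac{\alpha(\alpha+2)}{4x^2}$ (the $1/x^2$-coefficient being exactly the effective potential \eqref{eq:Veff}); then the Fourier transform $\mathcal{F}_y$ in the $y$-variable turns $-\partial_y^2$ into multiplication by $k^2$ and exhibits the Hilbert space as the constant-fibre direct integral $\int^\oplus_{\mathbb{R}}L^2(\mathbb{R}^+,\ud x)\,\ud k$. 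The goal is to show that under $\mathcal{F}_yU_\alpha$ the operator $H_\alpha$ (with its minimal domain) is carried into the direct integral $\int^\oplus_{\mathbb{R}}h_\alpha(k)\,\ud k$ of the fibre operators
\[
 h_\alpha(k)\;:=\;-\frac{\ud^2}{\ud x^2}+\frac{\alpha(\alpha+2)}{4x^2}+k^2x^{2\alpha}\,,\qquad k\in\mathbb{R}\,,
\]
each regarded on the minimal domain $C^\infty_c(\mathbb{R}^+)\subset L^2(\mathbb{R}^+,\ud x)$.

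Next I would run Weyl's endpoint analysis for $h_\alpha(k)$. At $x=+\infty$ the potential $k^2x^{2\alpha}+\frac{\alpha(\alpha+2)}{4x^2}$ either diverges (for $k\neq0$) or tends to $0$ (for $k=0$), so in every case the endpoint $+\infty$ is in the limit-point case. At $x=0$ the potential is, to leading order, $\frac{\nu^2-\frac14}{x^2}$ with $\nu=\frac{\alpha+1}{2}$, so that the two candidate solutions of $h_\alpha(k)u=\lambda u$ near $0$ behave like $x^{1/2\pm\nu}$; the classical analysis of the Bessel-type operator $-\frac{\ud^2}{\ud x^2}+\frac{\nu^2-1/4}{x^2}$ then gives that the endpoint $0$ is limit-circle precisely when $\nu<1$, i.e.\ when $\alpha\in[0,1)$, and limit-point when $\nu\geqslant1$, i.e.\ $\alpha\geqslant1$. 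Hence for every $\alpha\in[0,1)$ and every $k\in\mathbb{R}$ the operator $h_\alpha(k)$ is symmetric but \emph{not} essentially self-adjoint, with one-dimensional deficiency spaces.

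Finally I would invoke the standard direct-integral principle that $\int^\oplus_{\mathbb{R}}h_\alpha(k)\,\ud k$ is essentially self-adjoint if and only if $h_\alpha(k)$ is essentially self-adjoint for almost every $k$: since this fails for \emph{all} $k$ when $\alpha\in[0,1)$, the operator $H_\alpha$ is not essentially self-adjoint, which by the interpretation of Section~\ref{sec:intro} means exactly that there is no geometric quantum confinement in $M_\alpha$. The same decomposition yields $\dim\ker(H_\alpha^*\mp\ii)=\int^\oplus_{\mathbb{R}}\dim\ker(h_\alpha(k)^*\mp\ii)\,\ud k=\infty$, recovering the infinite deficiency index announced in the Introduction.

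I expect the main technical obstacle to be the rigorous justification that $\mathcal{F}_yU_\alpha$ genuinely decomposes the \emph{minimal} operator $H_\alpha$ with domain $C^\infty_c(M)$ as the direct integral of the fibre operators on the minimal domains $C^\infty_c(\mathbb{R}^+)$ — that is, the measurability of the family $k\mapsto h_\alpha(k)$ and of the associated deficiency subspaces, together with the fact that finite sums of product functions $\sum_j f_j(x)g_j(y)$ form a core, so that no domain is lost when passing back and forth between the total operator and its fibres. The explicit conjugation and the Weyl endpoint classification are then routine.
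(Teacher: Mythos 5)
Your overall strategy coincides with the paper's: conjugate by the unitary $\psi\mapsto x^{-\alpha/2}\psi$ and by the Fourier transform in $y$, view the resulting operator against the constant-fibre direct integral $\int_{\mathbb{R}}^{\oplus}L^2(\mathbb{R}^+,\ud x)\,\ud\xi$, and run Weyl's limit-point/limit-circle analysis on the fibres; your endpoint classification (limit point at $+\infty$; at $x=0$ the potential is $\frac{\nu^2-1/4}{x^2}$ to leading order with $\nu=\frac{\alpha+1}{2}$, hence limit circle precisely for $\alpha\in[0,1)$) agrees with Proposition~\ref{prop:Axiselfadjointness} and Corollary~\ref{cor:alpha}. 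The genuine gap is the step where you assert that $\mathcal{F}_yU_\alpha$ carries the minimal $H_\alpha$ \emph{onto} $\int_{\mathbb{R}}^{\oplus}h_\alpha(k)\,\ud k$ with each $h_\alpha(k)$ on the minimal domain $C^\infty_c(\mathbb{R}^+)$, and then invoke a ``standard principle'' that such a direct integral is essentially self-adjoint iff almost every fibre is. That identification is false, not merely delicate: the domain of the direct integral of the minimal fibre operators consists of all $\psi$ with $\psi(\cdot,\xi)\in C^\infty_c(\mathbb{R}^+)$ fibrewise and $\int_{\mathbb{R}}\|h_\alpha(\xi)\psi(\cdot,\xi)\|_{L^2(\mathbb{R}^+)}^2\,\ud\xi<\infty$, which is strictly larger than $\mathcal{F}_2C^\infty_c(\mathbb{R}^+_x\times\mathbb{R}_y)$ --- see the Remark surrounding \eqref{eq:Afpsi}--\eqref{eq:BextendsHf}, where only the \emph{proper} inclusion $\mathscr{H}_f\subset B_f$ holds. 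No core or measurability argument (your proposed fix) can restore equality, and the iff-principle you cite is a theorem only for \emph{self-adjoint} fibres \cite[Theorem XIII.85]{rs4}, so as written the argument does not close.

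The repair is operator-theoretic rather than measure-theoretic, and it is what the paper does. Define $B_f:=\int_{\mathbb{R}}^{\oplus}\overline{A_f(\xi)}\,\ud\xi$ as in \eqref{ed:defB}: it is a \emph{closed symmetric} extension of $\mathscr{H}_f$ (Proposition~\ref{prop:BsymmetricAndClosed}). One then proves the single implication actually needed, namely that self-adjointness of $B_f$ forces essential self-adjointness of $A_f(\xi)$ for almost every $\xi$ (Proposition~\ref{prop:Bselfadj-implies-Axiselfadj}, obtained by reading the surjectivity of $B_f+\ii$ fibrewise against $C^\infty_c$ test functions). Since for $\alpha\in[0,1)$ \emph{no} fibre is essentially self-adjoint, $B_f$ is not self-adjoint; and if $\mathscr{H}_f$ were essentially self-adjoint, its closure would be self-adjoint, hence maximally symmetric, yet admit the closed symmetric extension $B_f$, forcing $\overline{\mathscr{H}_f}=B_f$ --- a contradiction. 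Observe that for this negative direction the proper inclusion $\mathscr{H}_f\subset B_f$ suffices; no equality is required. The same caveat applies to your closing formula $\dim\ker(H_\alpha^*\mp\ii)=\int_{\mathbb{R}}\dim\ker(h_\alpha(k)^*\mp\ii)\,\ud k$: the paper instead exhibits the infinite orthogonal family $\Phi_J(x,\xi)=\varphi_\xi(x)\mathbf{1}_J(\xi)$ of deficiency elements directly.
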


\begin{theorem}\label{thm:confinement}
 If $\alpha\in[1,+\infty)$, then the operator $H_\alpha$ is essentially self-adjoint and therefore the Grushin plane $M_\alpha$ provides geometric quantum confinement.
\end{theorem}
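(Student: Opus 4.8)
The plan is to pass from the two-dimensional problem to a one-parameter family of one-dimensional problems on the half-line via a \emph{constant-fibre direct integral} decomposition, and then apply Weyl's limit-point/limit-circle criterion in each fibre. First I would Fourier-transform in the $y$-variable: since $\cH_\alpha=L^2(M,x^{-\alpha}\,\ud x\,\ud y)$ factorises as $L^2(\mathbb{R}^+,x^{-\alpha}\,\ud x)\otimes L^2(\mathbb{R}_y,\ud y)$, the map $\mathcal{F}_y$ implements a unitary equivalence $\cH_\alpha\cong\int^\oplus_{\mathbb{R}}L^2(\mathbb{R}^+,x^{-\alpha}\,\ud x)\,\ud k$. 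Under this transform the Laplace-Beltrami operator $-\Delta_{\mu_\alpha}=-\partial_x^2-x^{2\alpha}\partial_y^2+\frac{\alpha}{x}\partial_x$ becomes the fibered operator acting, in fibre $k$, as
\begin{equation}\label{eq:fibre-op}
 h_\alpha(k)\;=\;-\frac{\ud^2}{\ud x^2}+\frac{\alpha}{x}\frac{\ud}{\ud x}+k^2 x^{2\alpha}\,,
\end{equation}
initially on a suitable core of smooth compactly supported functions in $(0,+\infty)$. The key structural fact I would establish is that $H_\alpha$ is essentially self-adjoint on $C^\infty_c(M)$ if and only if $h_\alpha(k)$ is essentially self-adjoint on $C^\infty_c(\mathbb{R}^+)$ for (almost) every $k$; this is the standard direct-integral criterion, but some care is needed because $C^\infty_c(M)$ is not literally a direct integral of fibre cores — one reduces to showing that the deficiency spaces of $H_\alpha$ decompose fibrewise, using that tensor products of the form (half-line smooth function)$\otimes$(Schwartz function in $y$) are a core.

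Next I would transform each fibre operator into Schr\"odinger normal form. The first-order term $\frac{\alpha}{x}\frac{\ud}{\ud x}$ reflects the weight $x^{-\alpha}$ in the measure; conjugating by the unitary $U:L^2(\mathbb{R}^+,x^{-\alpha}\,\ud x)\to L^2(\mathbb{R}^+,\ud x)$, $(U\psi)(x)=x^{-\alpha/2}\psi(x)$, turns $h_\alpha(k)$ into a Sturm-Liouville operator of the form
\begin{equation}\label{eq:schrodinger-form}
 \widetilde{h}_\alpha(k)\;=\;-\frac{\ud^2}{\ud x^2}+\frac{\alpha(\alpha+2)}{4}\frac{1}{x^2}+k^2 x^{2\alpha}
\end{equation}
on $L^2(\mathbb{R}^+,\ud x)$ — note the coefficient $\frac{\alpha(\alpha+2)}{4}$ matches the effective potential \eqref{eq:Veff} of \cite{Prandi-Rizzi-Seri-2016}, which is a good consistency check. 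Now Weyl's theory applies: I must decide, for each endpoint $0$ and $+\infty$, whether $\widetilde h_\alpha(k)$ is limit-point or limit-circle. At $+\infty$ the term $k^2 x^{2\alpha}$ (for $k\neq 0$) forces limit-point; at $k=0$ the potential $\tfrac{\alpha(\alpha+2)}{4x^2}$ at infinity is a bounded perturbation of $-\ud^2/\ud x^2$ and again limit-point. The decisive endpoint is $x=0$: there the competition is between $-\ud^2/\ud x^2$ and the inverse-square potential $\frac{c}{x^2}$ with $c=\frac{\alpha(\alpha+2)}{4}$, whose Weyl classification is classical — limit-point at $0$ precisely when $c\geq\tfrac34$, i.e.\ when $\alpha(\alpha+2)\geq 3$, which is exactly $\alpha\geq 1$. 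For $\alpha\in[0,1)$ one has $c<\tfrac34$, hence limit-circle at $0$, so each fibre has a nontrivial self-adjointness defect and $H_\alpha$ is not essentially self-adjoint, proving Theorem \ref{thm:no_confinement}; for $\alpha\geq 1$ every fibre is limit-point at both ends, so $h_\alpha(k)$ is essentially self-adjoint for all $k$, and hence so is $H_\alpha$, proving Theorem \ref{thm:confinement}.

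The main obstacle I expect is the passage between the global operator and the fibred one: one must argue that essential self-adjointness of $H_\alpha$ on $C^\infty_c(M)$ is genuinely equivalent to the fibrewise essential self-adjointness of the $h_\alpha(k)$'s. The direct-integral machinery gives ``$\Leftarrow$'' for free once one knows $H_\alpha$ is decomposable (which follows since $\mathcal{F}_y$ intertwines $-\Delta_{\mu_\alpha}$ with a decomposable operator on the larger domain of functions of the form $u(x)v(y)$ with $u\in C^\infty_c(\mathbb{R}^+)$, $v\in\mathcal{S}(\mathbb{R})$, and this domain is itself a core for $H_\alpha$ by a mollification/cutoff argument). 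The converse direction ``$\Rightarrow$'' — that non-essential-self-adjointness of a single fibre, or of a positive-measure set of fibres, lifts to non-essential-self-adjointness of $H_\alpha$ — requires exhibiting an $L^2(M)$ solution of $H_\alpha^*\psi=\pm\ii\psi$; one builds it by taking, for $k$ in a small interval, the fibre deficiency solution (which exists and can be chosen measurably in $k$) and checking the resulting direct-integral vector is in $\cH_\alpha$ and in $\mathcal{D}(H_\alpha^*)$. A secondary technical point is the regularity/endpoint analysis at $x=0$ for $\alpha\in(0,1)$ non-integer, where the potential $x^{2\alpha}$ is only H\"older but this is harmless since it is bounded near $0$; and one should double-check the borderline case $\alpha=1$ (where $c=\tfrac34$ exactly) falls on the limit-point side, as it does. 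Everything else — the explicit conjugation, the Weyl classification of the inverse-square and power potentials — is routine and can be cited or computed directly.
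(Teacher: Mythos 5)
Your proposal is correct and follows essentially the same route as the paper: unitary removal of the measure weight, Fourier transform in $y$, a constant-fibre direct integral over $\xi$, and Weyl's limit-point/limit-circle analysis of $-\frac{\ud^2}{\ud x^2}+\frac{\alpha(\alpha+2)}{4x^2}+\xi^2x^{2\alpha}$ at $0$ and $+\infty$, with the threshold $\alpha(\alpha+2)\geqslant 3\Leftrightarrow\alpha\geqslant 1$. The obstacle you rightly flag --- that $C^\infty_c(M)$ is not a direct-integral core, so fibrewise essential self-adjointness does not transfer ``for free'' --- is resolved in the paper exactly along the lines you anticipate, namely by proving the inclusion $\mathscr{H}_f^*\subset B_f=\int_{\mathbb{R}}^{\oplus}\overline{A_f(\xi)}\,\ud\xi$ through the maximal-domain characterisation of the adjoint, which together with $B_f\supset\overline{\mathscr{H}_f}$ and the self-adjointness of $B_f$ yields $\overline{\mathscr{H}_f}=B_f$.
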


\begin{remark}
 The presence of geometric quantum confinement can be re-interpreted as follows (see also the discussion in \cite[Sect.~4.1]{Boscain-Laurent-2013}). Clearly,
 \begin{equation}\label{eq:decomp+-}
  L^2(M^+\cup M^-,\ud\mu_\alpha)\;\cong\;L^2(M^+,\ud \mu_\alpha)\oplus L^2(M^-,\ud \mu_\alpha)
 \end{equation}
 with $\mu_\alpha:=\mathrm{vol}_{g_\alpha}$ and $g_\alpha$ given by \eqref{eq:galphaeverywhere}, and if we set $H_\alpha^+:=H_\alpha$ and in complete analogy to \eqref{Halpha} we define $H_\alpha^-$ and $H_\alpha^{\cup}$ as densely defined, symmetric, lower semi-bounded operators respectively in $L^2(M^-,\ud \mu_\alpha)$ and  $L^2(M^+\cup M^-,\ud\mu_\alpha)$, by repeating verbatim the proof of Theorem \ref{thm:confinement} we see that $H_\alpha^+$, $H_\alpha^-$, and $H_\alpha^{\cup}$ are essentially self-adjoint in the respective Hilbert spaces whenever $\alpha\in[1,+\infty)$. It is also immediate to see that with respect to the decomposition \eqref{eq:decomp+-} one has $H_\alpha^{\cup}=H_\alpha^+\oplus H_\alpha^-$ and that 
 \begin{equation}
  \overline{H_\alpha^{\cup}}\;=\;\overline{H_\alpha^+}\oplus\overline{H_\alpha^-}\,.
 \end{equation}
 As a consequence, the propagators satisfy
 \begin{equation}
  e^{-\ii t \overline{H_\alpha^{\cup}}}\;=\;e^{-\ii t \overline{H_\alpha^+}}\oplus e^{-\ii t \overline{H_\alpha^-}}\,,\qquad \forall\,t\in\mathbb{R}\,.
 \end{equation}
 Therefore, for any initial datum $\psi_0\in\mathcal{D}(\overline{H_\alpha^{\cup}})\subset L^2(M^+\cup M^-,\ud\mu_\alpha)$ with support only within $M^+$, the \emph{unique} solution $\psi\in C^1(\mathbb{R}_t,L^2(M^+\cup M^-,\ud\mu_\alpha))$ to the Cauchy problem
 \begin{equation}
  \begin{cases}
   \;\ii\partial_t \psi \!\!&=\;\overline{H_\alpha^{\cup}}\,\psi \\
   \;\psi|_{t=0}\!\!&=\;\psi_0
  \end{cases}
 \end{equation}
 remains for all times supported (`confined') in $M^+$. The quantum particle initially prepared in the right open half-plane never crosses the $y$-axis towards the left half-plane.
\end{remark}

\begin{remark}
 In the absence of essential self-adjointness, the deficiency index of $H_\alpha$ is \emph{infinite}, as we shall show in the more general Theorem \ref{thm:generalisation_to_f}(iii) below. This opens the interesting problem, from the point of view of the quantum-mechanical interpretation, of classifying the self-adjoint extensions of $H_\alpha$ in terms of boundary conditions at the axis $x=0$, each generating a different dynamics in which the quantum particle `crosses the boundary'. In such an enormous family of extensions it is of interest, in particular, to discuss those qualified by `local' boundary conditions, the physically most natural ones. It is not difficult to show, and we intend to discuss these aspects in a follow-up analysis, that the Friedrichs extension satisfies Dirichlet boundary conditions and hence is the distinguished extension that preserves the confinement of the particle. All other extensions drive the particle up to the boundary. 
\end{remark}

\begin{remark}
 The lack of geometric quantum confinement in $M_\alpha$ for $\alpha\in[0,1)$ is compatible with the quantum confinement in \emph{regular} almost-Riemannian structures proved recently in \cite[Theorem 7.1]{Prandi-Rizzi-Seri-2016}. Indeed, as observed already in Remark \ref{rem:ARS-noARS}, what fails to hold in the first place is the almost-Riemannian structure on $\mathbb{R}^2$ with metric $g_\alpha$, owing to the non-smoothness of the field $X_2^{(\alpha)}$ in this regime of $\alpha$.
\end{remark}

As is going to emerge in the course of the proofs, our approach has a two-fold feature. On the one hand it is relatively `rigid', for it does not have an immediate generalisation in application to \emph{generic} almost-Riemannian structures, for which the more versatile, typically perturbative analyses of \cite{Boscain-Laurent-2013,Prandi-Rizzi-Seri-2016,Franceschi-Prandi-Rizzi-2017} appear as more efficient and informative. On the other hand, it is particularly `robust', whenever the problem can be boiled down to a constant-fiber direct integral scheme and to the study of self-adjointness along each fibre, and this allows us to cover a larger class of Grushin planes than that considered so far.

To this aim, let us introduce the manifold $M_f\equiv(M,g_f)$ by replacing \eqref{eq:Mgalpha} with
\begin{equation}\label{eq:Mf}
 g_f\;:=\;\ud x\otimes\ud x+f^2(x)\,\ud y\otimes\ud y
\end{equation}
for some measurable function $f$ on $\mathbb{R}$ satisfying
\begin{equation}\label{eq:assumtpions_f}
 \begin{array}{ll} 
  \mathrm{(i)} &   f(x)>0\;\;\;\forall x\neq 0 \\
  \mathrm{(ii)} &  f(x)\geqslant \kappa \;\;\;\textrm{in a neighbourhood of $x=0$ for some $\kappa>0$} \\
  \mathrm{(iii)} &  \textrm{$f\in C^\infty(\mathbb{R}\!\setminus\!\{0\})$} \\
  \mathrm{(iv)} & 2f(x)f''(x)-f'(x)^2\geqslant 0 \;\;\;\forall x\neq 0\,.
 \end{array}
\end{equation}
The interest in assumptions \eqref{eq:assumtpions_f} is precisely when $f$ becomes singular as $x\to 0$.
The reason of condition (iv) will be clarified in due time. The special choice considered above was $f(x)=|x|^{-\alpha}$: in this case condition (iv) reads $\alpha(2+\alpha) |x|^{-2(1+\alpha)}\geqslant 0$.  The smoothness in condition (iii) is required to match the definition of Riemannian manifold, otherwise we shall only use $C^2$-regularity.

(It is worth mentioning that this point of view, with the more general manifold $M_f$, is the same as that of \cite{Boscain-Laurent-2013}, and so are formulas \eqref{eq:frame-f}-\eqref{eq:unaltra} below: here in addition we take care of the explicit assumptions \eqref{eq:assumtpions_f} required on $f$, which finally allow us to prove our general Theorem \ref{thm:generalisation_to_f}.)

This yields a generalised Grushin plane with global orthonormal frame 
\begin{equation}\label{eq:frame-f}
\{X_1,X_2^{(f)}\}\;=\;\left\{ 
\begin{pmatrix}
 1 \\ 0
\end{pmatrix},\;
\begin{pmatrix}
 0 \\ 1/f(x)
\end{pmatrix}
\right\}\equiv\;\Big\{\frac{\partial}{\partial x},\frac{1}{f(x)}\frac{\partial}{\partial y}\Big\}\,,
\end{equation}
and a computation analogous to \eqref{eq:volumeform}-\eqref{eq:Deltamualpha} shows that the associated Laplace-Beltrami operator $\Delta_f\equiv\Delta_{\mu_{g_f}}$ ($\mu_{g_f}\equiv\mathrm{vol}_{g_f}=f(x)\,\ud x\wedge\ud y$) is given by
\begin{equation}\label{eq:unaltra}
 \Delta_{f}\;=\;\frac{\partial^2}{\partial x^2}+\frac{1}{f^2(x)}\,\frac{\partial^2}{\partial y^2}+\frac{f'(x)}{f(x)}\,\frac{\partial}{\partial x}\,.
\end{equation}
Let us then define the `\emph{minimal}' free Hamiltonian
\begin{equation}\label{Hf}
 H_f\;:=\;-\Delta_{f}\,,\qquad\mathcal{D}(H_f)\;:=\;C^\infty_c(M)\,,
\end{equation}
a densely defined, symmetric, lower semi-bounded operator in $\cH_f:=L^2(M,\ud\mu_{g_f})$.
The same scheme used for Theorems \ref{thm:no_confinement} and \ref{thm:confinement} allows us to discuss the essential self-adjointness of $H_f$. The result is the following.

\begin{theorem}\label{thm:generalisation_to_f}
 Let $f$ be a measurable function satisfying assumptions \eqref{eq:assumtpions_f} and let $H_f$ be the corresponding operator defined in \eqref{Hf}. 
 \begin{itemize}
  \item[(i)] If, point-wise for every $x\neq 0$, 
  \begin{equation}\label{eq:fcondition_conf}
   2 f f''-f'^2\;\geqslant\;\frac{3}{x^2}\,f^2\,,
  \end{equation}
  then $H_f$ is essentially self-adjoint with respect to the Hilbert space $\cH_f$, and therefore the generalised Grushin plane $M_f$ produces geometric quantum confinement.
  \item[(ii)] If, point-wise for every $x\neq 0$, 
  \begin{equation}\label{eq:fcondition_noconf}
   2 f f''-f'^2\;\leqslant\;\frac{3-\varepsilon}{x^2}\,f^2\qquad\textrm{for some }\varepsilon>0\,,
  \end{equation}
  then $H_f$ is not essentially self-adjoint, and therefore there is no geometric quantum confinement within the generalised Grushin plane $M_f$. 
  \item[(iii)] In case (ii) the operator $H_f$ has \emph{infinite} deficiency index.
 \end{itemize}
 \end{theorem}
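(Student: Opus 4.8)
The plan is to exploit the invariance of $g_f$ under translations in $y$ in order to write $H_f$ as a constant-fibre direct integral over the dual variable $k$, and then, fibrewise, to settle the limit-point/limit-circle alternative at the two endpoints $x=0$ and $x=+\infty$ by comparison with the exactly solvable potentials $c\,x^{-2}$. Since the coefficients of $\Delta_f$ in \eqref{eq:unaltra} do not depend on $y$, the partial Fourier transform $\mathcal{F}_y$ is a unitary map $\cH_f\to\int_{\mathbb{R}}^{\oplus}L^2(\mathbb{R}^+,f(x)\,\ud x)\,\ud k$ that intertwines $H_f$ with a decomposable operator whose fibre over $k\in\mathbb{R}$ is the minimal operator associated, on $C_c^\infty(\mathbb{R}^+)$, with
\[
 h_f(k)\;=\;-\frac{\ud^2}{\ud x^2}-\frac{f'(x)}{f(x)}\,\frac{\ud}{\ud x}+\frac{k^2}{f^2(x)}
\]
in $L^2(\mathbb{R}^+,f(x)\,\ud x)$. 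I would check, routinely, that $\mathcal{F}_y$ sends $C_c^\infty(M)$ to a core of the decomposable operator (it already sends the algebraic span of products $\varphi(x)\chi(y)$, with $\varphi\in C_c^\infty(\mathbb{R}^+)$ and $\chi\in C_c^\infty(\mathbb{R})$, to the span of $\varphi(x)\widehat{\chi}(k)$, which is a decomposable core); then standard direct-integral theory gives that $H_f$ is essentially self-adjoint if and only if $h_f(k)$ is essentially self-adjoint for a.e.\ $k$, and that the deficiency subspaces of $H_f$ decompose as the direct integral of the fibre deficiency subspaces.

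Next I would put each fibre in Schr\"odinger form by the Liouville substitution $U:L^2(\mathbb{R}^+,f\,\ud x)\to L^2(\mathbb{R}^+,\ud x)$, $(U\psi)(x)=\sqrt{f(x)}\,\psi(x)$, which is unitary and conjugates $h_f(k)$ into $-\frac{\ud^2}{\ud x^2}+W_k$ with
\[
 W_k(x)\;=\;\frac{k^2}{f^2(x)}+Q(x)\,,\qquad Q(x):=\frac{2f(x)f''(x)-f'(x)^2}{4f(x)^2}\,,
\]
so that the left-hand sides of \eqref{eq:fcondition_conf}--\eqref{eq:fcondition_noconf} equal $4f^2Q$ and the two conditions compare $4x^2Q(x)$ with the threshold $3$. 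Two features are then immediate: assumption \eqref{eq:assumtpions_f}(iv) says exactly that $g:=\sqrt f$ is convex, i.e.\ $Q=g''/g\geqslant 0$, and $g$ is a positive solution of $(-\tfrac{\ud^2}{\ud x^2}+Q)g=0$; and, since $f\geqslant\kappa>0$ near $x=0$, the term $k^2/f^2$ is bounded near $0$ and so is irrelevant to the endpoint classification there, while $W_k\geqslant 0$ on all of $\mathbb{R}^+$, so (in particular $W_k\geqslant -C(1+x^2)$) a classical limit-point criterion at infinity gives that $x=+\infty$ is limit-point for every $k$. The whole problem therefore collapses to the nature of the endpoint $x=0$, which depends only on $Q$.

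Then I would run Weyl's dichotomy at $x=0$, recalling that for $c\,x^{-2}$ the endpoint $0$ is limit-point precisely when $c\geqslant\tfrac34$. For (i): under \eqref{eq:fcondition_conf} one has $W_k\geqslant Q\geqslant\tfrac34 x^{-2}$ near $0$, so by the standard monotonicity of the limit-point property under an increase of the potential, $x=0$ is limit-point for every fibre; hence every $h_f(k)$, and thus $H_f$, is essentially self-adjoint. For (ii): under \eqref{eq:fcondition_noconf} one has $0\leqslant Q\leqslant\tfrac{3-\varepsilon}{4} x^{-2}$ near $0$ (in particular $\varepsilon\leqslant 3$), and I would show $x=0$ is then limit-circle for every fibre, using the explicit solution $g=\sqrt f$: writing $g=h\,x^{s_-}$ with $s_-:=\tfrac12(1-\sqrt{4-\varepsilon})\in(-\tfrac12,\tfrac12)$, the differential inequality $g''\leqslant\tfrac{3-\varepsilon}{4}x^{-2}g$ becomes the statement that $x^{2s_-}h'$ is non-increasing, and integrating (using $-2s_-<1$) shows that $h$ stays bounded as $x\to 0^+$; hence $g(x)=O(x^{s_-})$, so $g\in L^2$ near $0$, and the reduction-of-order partner $g(x)\int_x^{\delta}g^{-2}$ is in $L^2$ near $0$ as well, since $g^{-2}\leqslant\kappa^{-1}$ there. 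Thus $h_f(k)$ has deficiency indices $(1,1)$ for every $k$ (limit-circle at $0$, limit-point at $+\infty$), whence $H_f$ is not essentially self-adjoint; and since these fibre deficiency spaces are one-dimensional for $k$ ranging over a set of infinite Lebesgue measure, their direct integral is infinite-dimensional, which gives (iii).

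The step I expect to be the main obstacle is the limit-circle half of the fibre analysis: extracting from the one-sided bound \eqref{eq:fcondition_noconf} enough control on the growth of $f$ at $0$ to force $\sqrt f\in L^2$ near $0$. The Riccati/Sturm-type comparison sketched above is the technical core, and it is exactly where assumption \eqref{eq:assumtpions_f}(iv) and the sharp constant $3$ (equivalently the classical threshold $c=\tfrac34$ for $c\,x^{-2}$) enter, the slack $\varepsilon>0$ being what keeps $s_-$ strictly above $-\tfrac12$. Everything else — identifying the fibres, the core argument for $C_c^\infty(M)$, the passage of essential self-adjointness and of the deficiency indices through the direct integral, and the limit-point claims at $+\infty$ and (under (i)) at $0$ — is routine once the $y$-translation invariance has been invoked.
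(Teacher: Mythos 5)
Your overall architecture coincides with the paper's: partial Fourier transform in $y$, a constant-fibre direct integral over the dual variable, and Weyl's limit-point/limit-circle alternative in each fibre (with the threshold $\tfrac34 x^{-2}$ at $x=0$ and limit point at $+\infty$ from positivity of the potential). Your fibre ODE analysis is correct and in the limit-circle half actually more self-contained than the paper's, which simply invokes the Reed--Simon $c\,x^{-2}$ criterion: your observation that $g=\sqrt f$ solves $-g''+Qg=0$ with $Q=(2ff''-f'^2)/(4f^2)$, together with the monotonicity of $x^{2s_-}h'$ and the bounded reduction-of-order partner (using $f\geqslant\kappa$ near $0$), is a clean way to see that both solutions are square-integrable at $0$ under \eqref{eq:fcondition_noconf}. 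But you have mislocated the main difficulty: it is not there.

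The genuine gap is the sentence ``standard direct-integral theory gives that $H_f$ is essentially self-adjoint if and only if $h_f(k)$ is essentially self-adjoint for a.e.\ $k$, and that the deficiency subspaces decompose fibrewise.'' No standard theorem says this about the \emph{minimal} operator on $C^\infty_c(M)$, because that operator is \emph{not} decomposable: the decomposable object is $B_f=\int^\oplus\overline{A_f(\xi)}\,\ud\xi$, whose domain is strictly larger than $\mathcal{F}_2\,C^\infty_c$ (the paper devotes a Remark to the proper inclusion $\mathscr{H}_f\subsetneq B_f$), and Reed--Simon XIII.85 only gives ``fibres self-adjoint $\Rightarrow$ $B_f$ self-adjoint.'' What is actually needed, and is the technical core of the paper, is: for (i), that $\mathcal{F}_2\,C^\infty_c$ is a \emph{core} for $B_f$ --- the paper proves this by showing $\mathscr{H}_f^*\subset B_f$, which uses precisely that $A_f(\xi)^*=\overline{A_f(\xi)}$ in this regime, and then takes adjoints; for (ii), that $B_f$ self-adjoint forces a.e.\ fibre essential self-adjointness (not in Reed--Simon; the paper proves it by evaluating $(B_f+\ii)\psi_\varphi=\varphi$ along fibres), after which one concludes via maximal symmetry of self-adjoint operators, since $B_f$ is a closed symmetric non-self-adjoint extension of $\overline{\mathscr{H}_f}$. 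Your parenthetical claim that the span of products $\varphi(x)\widehat\chi(k)$ ``is a decomposable core'' is exactly this missing step, asserted rather than proved; note that a self-adjoint extension of a symmetric operator never by itself implies essential self-adjointness of the latter (the Dirichlet Laplacian on $(0,1)$ extends the non-essentially-self-adjoint minimal Laplacian), so the ``if'' direction of your equivalence cannot be waved through. The claim is fillable --- either by the paper's adjoint-inclusion argument or by a range-density argument showing $(B_f+\mathbbm{1})$ applied to that span is dense --- but until one of these is written the proof of (i) is incomplete, and similarly (iii) should be replaced by the paper's explicit construction of the eigenfunctions $\varphi_\xi(x)\mathbf{1}_J(\xi)$ of $\mathscr{H}_f^*$, which only needs the easy inclusion of the fibrewise deficiency vectors into $\ker(\mathscr{H}_f^*\mp\ii\mathbbm{1})$.
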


 \begin{remark}
  Theorem \ref{thm:generalisation_to_f} reproduces Theorems \ref{thm:no_confinement} and \ref{thm:confinement} when one makes the special choice $f(x)=x^{-\alpha}$, for in this case
  \begin{equation*}\label{f_ratio_with_alpha}
   2 f f''-f'^2-\frac{3}{x^2}\,f^2\;=\;\frac{\,(\alpha-1)(3+\alpha)\,}{4x^2}\,,
  \end{equation*}
 whence the threshold value $\alpha=1$ between absence and presence of confinement. Conditions \eqref{eq:fcondition_conf}-\eqref{eq:fcondition_noconf} are homogeneous in $f$, thus the same conclusion holds for $f(x)=\lambda x^{-\alpha}$, $\lambda>0$: this amounts to dilate the $y$-axis, in practice leaving the metric unchanged. 
 \end{remark}

 Theorems \ref{thm:no_confinement}, \ref{thm:confinement}, and \ref{thm:generalisation_to_f} are going to be proved in Section \ref{sec:proofs} after an amount of preparation in Section \ref{sec:preliminaries}.


%
%
%
%
%
%
%
%
%

\section{Technical preliminaries}\label{sec:preliminaries}

\subsection{Unitarily equivalent reformulation}\label{sec:unitarily_equiv}~

Let us discuss the more general setting of Theorem \ref{thm:generalisation_to_f}, that is, the problem of the essential self-adjointness of the minimally defined Laplace-Beltrami operator \eqref{Hf} in the Hilbert space $L^2(M,\ud\mu_{g_f})=L^2(\mathbb{R}^+\times\mathbb{R},f(x)\ud x\ud y)$.

Through the unitary transformation
\begin{equation}
 U_f:L^2(\mathbb{R}^+\times\mathbb{R},f(x)\ud x\ud y)\stackrel{\cong}{\longrightarrow}L^2(\mathbb{R}^+\times\mathbb{R},\ud x\ud y)\,,\qquad \psi\mapsto f^{1/2}\psi
\end{equation}
a simple computation shows that
\begin{equation}\label{eq:diff_OP}
 \begin{split}
 U_fH_fU_f^{-1}\;&=\;-\frac{\partial^2}{\partial x^2}-\frac{1}{f^2}\frac{\partial^2}{\partial y^2}+\frac{2ff''-f'^2}{4f^2} \\
 \mathcal{D}(U_fH_fU_f^{-1})\;&=\;C^\infty_c(\mathbb{R}^+_x\times\mathbb{R}_y)\,.
 \end{split}
\end{equation}

The further unitary $\mathcal{F}_2:L^2(\mathbb{R}^+\times\mathbb{R},\ud x\ud y)\stackrel{\cong}{\longrightarrow}L^2(\mathbb{R}^+\times\mathbb{R},\ud x\ud \xi)$ consisting of the Fourier transform in the $y$-variable only produces the operator
\begin{equation}
 \mathscr{H}_f\;:=\;\mathcal{F}_2U_fH_fU_f^{-1}\mathcal{F}_2^{-1}
\end{equation}
whose domain and action are given by
\begin{equation}\label{eq:Hscrf}
 \begin{split}
  \mathscr{H}_f\;&=\;-\frac{\partial^2}{\partial x^2}+\frac{\xi^2}{f^2}+\frac{2ff''-f'^2}{4f^2} \\
  \mathcal{D}(\mathscr{H}_f)\;&=\;\{\psi\in L^2(\mathbb{R}^+\times\mathbb{R},\ud x\ud \xi)\,|\,\psi\in\mathcal{F}_2C^\infty_c(\mathbb{R}^+_x\times\mathbb{R}_y)\}\,.
 \end{split}
\end{equation}
Thus, for each $\psi\in\mathcal{D}(\mathscr{H}_f)$ the functions $\psi(\cdot,\xi)$ are compactly supported in $x$ inside $(0,+\infty)$ for every $\xi$, whereas the functions $\psi(x,\cdot)$ are some special case of Schwartz functions for every $x$.

The particular class of choices $f(x)=x^{-\alpha}$, $\alpha>0$, yield the operator
\begin{equation}
  \begin{split}
  \mathscr{H}_\alpha\;&=\;-\frac{\partial^2}{\partial x^2}+\xi^2 x^{2\alpha}+\frac{\,\alpha(2+\alpha)\,}{4x^2} \\
  \mathcal{D}(\mathscr{H}_\alpha)\;&=\;\{\psi\in L^2(\mathbb{R}^+\times\mathbb{R},\ud x\ud \xi)\,|\,\psi=\mathcal{F}_2C^\infty_c(\mathbb{R}^+_x\times\mathbb{R}_y)\}\,.
 \end{split}
\end{equation}

The self-adjointness problem for $H_f$, resp.~$H_\alpha$, is tantamount as the self-adjointness problem for $\mathscr{H}_f$, resp.~$\mathscr{H}_\alpha$, and it is this second problem that we are going to discuss.

\begin{remark}
 The `potential' (multiplicative) part of $\mathscr{H}_\alpha$, that is, $\frac{\,\alpha(2+\alpha)\,}{4x^2}$, is precisely the effective potential $V_\mathrm{eff}$ introduced in \cite{Prandi-Rizzi-Seri-2016} for the study of geometric confinement, computed for the special case of Grushin planes -- see \eqref{eq:Veff} above. Whereas in \cite{Prandi-Rizzi-Seri-2016} the intrinsic geometric nature of $V_\mathrm{eff}$ was emphasized, we can here supplement that interpretation by observing that $V_\mathrm{eff}$ encodes precisely the multiplicative contribution of the original Laplace-Beltrami operator when one transforms unitarily the underlying Hilbert space $L^2(M,\ud\mu_g)$, the unitary transformation being $\mathcal{F}_2\circ U_\alpha$. 
\end{remark}

For later purposes, let us also mention the following.

\begin{lemma}\label{lemma:Hfstar}
 The adjoint of $\mathscr{H}_f$ is the operator
\begin{equation}\label{eq:Hfstar}
 \begin{split}
  \mathscr{H}_f^*\;&=\;-\frac{\partial^2}{\partial x^2}+\frac{\xi^2}{f^2}+\frac{2ff''-f'^2}{4f^2} \\
  \mathcal{D}(\mathscr{H}_f^*)\;&=\;
  \left\{\!\!
  \begin{array}{c}
   \psi\in L^2(\mathbb{R}^+\times\mathbb{R},\ud x\ud \xi)\;\;\textrm{such that} \\
   \big(-\frac{\partial^2}{\partial x^2}+\frac{\xi^2}{f^2}+\frac{2ff''-f'^2}{4f^2}\big)\psi\in L^2(\mathbb{R}^+\times\mathbb{R},\ud x\ud \xi)
  \end{array}
  \!\!\right\}.
 \end{split}
\end{equation}
\end{lemma}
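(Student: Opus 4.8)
The plan is to identify $\mathscr{H}_f^*$ by the standard duality between the minimal and maximal operators associated to a formally symmetric differential expression, here treated as a differential operator in $x$ only, with $\xi$ as a parameter. Write $\tau_\xi := -\frac{\partial^2}{\partial x^2}+\frac{\xi^2}{f^2}+\frac{2ff''-f'^2}{4f^2}$ for the formal expression on $(0,+\infty)$; by assumptions \eqref{eq:assumtpions_f} the coefficient $q_\xi(x):=\frac{\xi^2}{f(x)^2}+\frac{2ff''-f'^2}{4f^2}$ is in $C^\infty((0,+\infty))$ and locally bounded, so $\tau_\xi$ is a regular Sturm–Liouville expression on every compact subinterval of $(0,+\infty)$. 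Then $\mathscr{H}_f$ is nothing but the restriction of (the direct integral over $\xi$ of) $\tau_\xi$ to the core $\mathcal{F}_2 C^\infty_c(\mathbb{R}^+_x\times\mathbb{R}_y)$, whose sections in $x$ are in $C^\infty_c((0,+\infty))$.

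First I would show the inclusion "$\supseteq$": if $\psi\in L^2(\mathbb{R}^+\times\mathbb{R},\ud x\ud\xi)$ and $\tau\psi:=(-\partial_x^2+q)\psi\in L^2$, then integration by parts against any $\varphi\in\mathcal{D}(\mathscr{H}_f)$ shows $\langle\psi,\mathscr{H}_f\varphi\rangle=\langle\tau\psi,\varphi\rangle$, the boundary terms at $x=0$ and $x=+\infty$ vanishing because $\varphi(\cdot,\xi)$ is compactly supported in $(0,+\infty)$; hence $\psi\in\mathcal{D}(\mathscr{H}_f^*)$ and $\mathscr{H}_f^*\psi=\tau\psi$. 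Here one has to be mildly careful that $\tau\psi$ is computed distributionally and that, since $q\varphi\in L^2$ already, the pairing $\langle\psi,q\varphi\rangle$ makes sense and equals $\langle q\psi,\varphi\rangle$ as a genuine integral once one knows $q\psi\in L^1_{\mathrm{loc}}$; this follows from local boundedness of $q$ away from $x=0$ together with the square-integrability of $\psi$, restricting attention to the support of $\varphi$.

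The reverse inclusion "$\subseteq$" is the substantive point. Suppose $\psi\in\mathcal{D}(\mathscr{H}_f^*)$, so there is $\eta\in L^2$ with $\langle\psi,\mathscr{H}_f\varphi\rangle=\langle\eta,\varphi\rangle$ for all $\varphi\in\mathcal{D}(\mathscr{H}_f)$. Testing against $\varphi$ whose $x$-section is an arbitrary element of $C^\infty_c((0,+\infty))$ and whose $\xi$-dependence is arbitrary within $\mathcal{F}_2C^\infty_c$, one reads off that $\tau\psi=\eta$ in the sense of distributions on $(0,+\infty)\times\mathbb{R}$; in particular $\tau\psi\in L^2$, which is exactly the membership condition in \eqref{eq:Hfstar}, and simultaneously $\mathscr{H}_f^*\psi=\eta=\tau\psi$, matching the claimed action. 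The only thing to verify is that test functions of the stated product form are rich enough to separate distributions, i.e. that $\{\varphi_1(x)\varphi_2(\xi)\}$ with $\varphi_1\in C^\infty_c((0,+\infty))$ and $\varphi_2$ ranging over $\mathcal{F}_2$ of $C^\infty_c(\mathbb{R}_y)$ span a dense subspace of test functions on the open set — a routine density argument, using that $\mathcal{F}_2 C^\infty_c(\mathbb{R}_y)$ is dense in $L^2(\mathbb{R}_\xi)$ and that finite linear combinations of products are dense in $C^\infty_c$ of the product domain.

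I expect the main (mild) obstacle to be bookkeeping at the endpoint $x=0$: one must make sure no hidden boundary contribution survives. This is handled cleanly by the observation that every $\varphi\in\mathcal{D}(\mathscr{H}_f)$ has $x$-section compactly supported strictly inside $(0,+\infty)$, so all integrations by parts are performed on a compact subinterval where $\tau_\xi$ is regular and the boundary terms are identically zero; no Weyl limit-point/limit-circle discussion is needed at this stage, since we are only computing the adjoint of the minimal operator, not classifying self-adjoint extensions. Thus the proof reduces to the two inclusions above plus the elementary density remark, and the formula \eqref{eq:Hfstar} follows.
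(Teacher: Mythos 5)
Your proposal proves the right statement and rests on the same underlying principle as the paper --- the adjoint of a minimally defined differential operator with smooth coefficients is the maximal realisation of the formal adjoint --- but it executes that principle in a different place. The paper's proof is a three-line reduction: $\mathscr{H}_f$ is unitarily conjugate via the partial Fourier transform $\mathcal{F}_2$ to the minimal operator \eqref{eq:diff_OP} on $C^\infty_c(\mathbb{R}^+_x\times\mathbb{R}_y)$; there the minimal/maximal duality is the literal textbook statement (Schm\"udgen, Sect.~1.3.2), because the test-function space is honestly $C^\infty_c$ of the open set; one then transports the adjoint back through $\mathcal{F}_2$ using $(\mathcal{F}_2T\mathcal{F}_2^{-1})^*=\mathcal{F}_2T^*\mathcal{F}_2^{-1}$ and the fact that $\mathcal{F}_2$ intertwines $-\partial_y^2$ with multiplication by $\xi^2$. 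You instead run the duality directly in the $(x,\xi)$ picture, which forces you to work with the awkward test space $\mathcal{F}_2C^\infty_c(\mathbb{R}^+_x\times\mathbb{R}_y)$.

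That choice is where your ``routine density argument'' is not right as stated. The admissible test functions $\varphi_1(x)\,\widehat{\varphi_2}(\xi)$ have $\widehat{\varphi_2}$ Schwartz but \emph{not} compactly supported in $\xi$, so their span is not contained in $C^\infty_c((0,\infty)\times\mathbb{R}_\xi)$, and conversely a genuine $\chi\in C^\infty_c((0,\infty)\times\mathbb{R}_\xi)$ does not lie in $\mathcal{D}(\mathscr{H}_f)$; so identifying the distribution $\tau\psi$ really does require an approximation step. Moreover, $L^2$-density of $\mathcal{F}_2C^\infty_c(\mathbb{R}_y)$ in $L^2(\mathbb{R}_\xi)$ is insufficient to pass the identity $\langle\psi,\tau\varphi\rangle=\langle\eta,\varphi\rangle$ to the limit, because $\tau$ contains the unbounded multiplier $\xi^2/f^2$: the approximating sequence must converge in a topology controlling the weight $\xi^2$. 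The fix is standard --- $\mathcal{F}C^\infty_c(\mathbb{R})$ is dense in $\mathcal{S}(\mathbb{R})$ in the Schwartz topology, which dominates every polynomially weighted $L^2$ norm, and tensors $C^\infty_c((0,\infty))\otimes\mathcal{S}(\mathbb{R}_\xi)$ then suffice to test $\tau\psi$ on the open set --- but it must be phrased in these terms rather than via $L^2$-density. With that step repaired your two inclusions are correct and complete; the net effect is that you re-derive by hand, in the Fourier variables, what the paper obtains by conjugating the standard minimal/maximal duality through the unitary $\mathcal{F}_2$.
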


\begin{proof}
 $\mathscr{H}_f$ is unitarily equivalent, via Fourier transform in the second variable, to the minimally defined differential operator \eqref{eq:diff_OP}, whose adjoint is by standard arguments \cite[Sect.~1.3.2]{schmu_unbdd_sa} the maximally defined realisation of the same differential action, thus with domain consisting of the elements $F$'s such that both $F$ and $(-\frac{\partial^2}{\partial x^2}-\frac{1}{f^2}\frac{\partial^2}{\partial y^2}+\frac{2ff''-f'^2}{4f^2})F$ belong to $L^2(\mathbb{R}^+\times\mathbb{R},\ud x\ud y)$. Fourier-transforming such adjoint then yields \eqref{eq:Hfstar}. 
\end{proof}

\subsection{Constant-fibre direct integral scheme}~

Whereas obviously $L^2(\mathbb{R}^+_x\times\mathbb{R}_\xi,\ud x\ud \xi)\cong L^2(\mathbb{R}^+,\ud x)\otimes  L^2(\mathbb{R},\ud\xi)$, the operator $\mathscr{H}_f$ is not a simple product with respect to the above factorisation, it rather reads as the sum of two products
\begin{equation}
 \mathscr{H}_f\;=\;\Big(-\frac{\partial^2}{\partial x^2}+\frac{2ff''-f'^2}{4f^2}\Big)\otimes\mathbbm{1}_\xi+\frac{1}{f^2}\otimes \xi^2
\end{equation}
each of which with the same domain as $\mathscr{H}_f$ itself. The second summand is manifestly essentially self-adjoint on $L^2(\mathbb{R}^+,\ud x)\otimes  L^2(\mathbb{R},\ud\xi)$, whereas the self-adjointness of the first summand boils down to the analysis of the factor acting on $L^2(\mathbb{R}^+,\ud x)$ only, yet there is no general guarantee that the sum of the two preserves the essential self-adjointness.

It is more natural to regard $\mathscr{H}_f$ with respect to the constant-fibre direct integral structure
\begin{equation}\label{L^2integralDecomp}
 \begin{split}
  \cH\;:=\;L^2(\mathbb{R}^+\times\mathbb{R},\ud x\ud \xi)\;&\cong\;L^2\big(\mathbb{R},\ud \xi\,;L^2(\mathbb{R}^+,\ud x)\big) \\
  &\equiv\;\int_{\mathbb{R}}^{\oplus}\ud \xi \,L^2(\mathbb{R}^+,\ud x)\,,
 \end{split}
\end{equation}
thus thinking of $L^2(\mathbb{R}^+_x\times\mathbb{R}_\xi,\ud x\ud \xi)$ as $L^2(\mathbb{R}^+,\ud x)$-valued square-integrable functions of $\xi\in\mathbb{R}$. The space $\mathfrak{h}:=L^2(\mathbb{R}^+,\ud x)$ is the (constant) \emph{fibre} of the direct integral and the scalar products satisfy
\begin{equation}\label{eq:scalar_products_fibred}
 \langle \psi,\varphi\rangle_{\cH}\;=\;\int_{\mathbb{R}}\langle \psi(\cdot,\xi),\varphi(\cdot,\xi)\rangle_{\mathfrak{h}}\,\ud \xi\,.
\end{equation}
As well known, this is the natural scheme for the multiplication operator form of the spectral theorem \cite[Sect.~7.3]{Hall-2013_QuantumTheoryMathematicians}, as well as for the analysis of Schr\"{o}dinger's operators with periodic potentials \cite[Sect.~XIII.16]{rs4}, and we shall exploit this scheme here for the self-adjointness problem of $\mathscr{H}_f$.

For each $\xi\in\mathbb{R}$ we introduce the operator
\begin{equation}
 A_f(\xi)\;:=\;-\frac{\ud^2}{\ud x^2}+\frac{\xi^2}{f^2}+\frac{2ff''-f'^2}{4f^2}\,,\qquad\mathcal{D}(A_f(\xi))\;:=\;C^\infty_c(\mathbb{R}^+)
\end{equation}
acting on the fibre Hilbert space $\mathfrak{h}$. When $f(x)=x^{-\alpha}$ we write
\begin{equation}
 A_\alpha(\xi)\;:=\;-\frac{\ud^2}{\ud x^2}+\xi^2 x^{2\alpha}+\frac{\,\alpha(2+\alpha)\,}{4x^2}\,,\quad\,\mathcal{D}(A_\alpha(\xi))\;:=\;C^\infty_c(\mathbb{R}^+)\,.
\end{equation}
By construction the map $\mathbb{R}\ni\xi\mapsto  A_f(\xi)$ has values in the space of densely defined, symmetric operators on $\mathfrak{h}$, in fact all with the \emph{same} domain irrespectively of $\xi$, and all positive because of the assumptions on $f$. In each $A_f(\xi)$ $\xi$ plays the role of a fixed parameter. Moreover, all the $A_f(\xi)$'s are closable and each $\overline{A_f(\xi)}$ is positive and with the same dense domain in $\mathfrak{h}$. Arguing as for Lemma \ref{lemma:Hfstar} one has
\begin{equation}\label{eq:Afstar}
 \begin{split}
  A_f(\xi)^*\;&=\;-\frac{\ud^2}{\ud x^2}+\frac{\xi^2}{f^2}+\frac{2ff''-f'^2}{4f^2} \\
  \mathcal{D}(A_f(\xi)^*)\;&=\;
  \left\{\!\!
  \begin{array}{c}
   \psi\in L^2(\mathbb{R}^+,\ud x)\;\;\textrm{such that} \\
   \big(-\frac{\ud^2}{\ud x^2}+\frac{\xi^2}{f^2}+\frac{2ff''-f'^2}{4f^2}\big)\psi\in L^2(\mathbb{R}^+,\ud x)
  \end{array}
  \!\!\right\}.
 \end{split}
\end{equation}

Next, with respect to the decomposition \eqref{L^2integralDecomp} we define the operator $B_f$ in the Hilbert space $\cH$ by
\begin{equation}\label{ed:defB}
 \begin{split}
  \mathcal{D}(B_f)\;&:=\;\left\{\psi\in\cH\,\left|\! 
  \begin{array}{l}
   \mathrm{(i)}\quad\psi(\cdot,\xi)\in\mathcal{D}(\overline{A_f(\xi)})\textrm{ for almost every }\xi \\
   \mathrm{(ii)}\,\displaystyle\int_\mathcal{\mathbb{R}}\big\|\overline{A_f(\xi)}\psi(\cdot,\xi)\big\|_{\mathfrak{h}}^2\,\ud\xi<+\infty
  \end{array}
  \!\!\right.\right\}  \\
  (B_f\psi)(x,\xi)\;&:=\;\big(\overline{A_f(\xi)}\,\psi(\cdot,\xi)\big)(x)\,.
 \end{split}
\end{equation}
As customary, for the \emph{whole} \eqref{ed:defB} we use the symbol
\begin{equation}\label{eq:Bfdecomposable}
 B_f\;=\;\int_{\mathbb{R}}^{\oplus}\overline{A_f(\xi)}\,\ud\xi\,.
\end{equation}

It can be argued that the fact that the
$\overline{A_f(\xi)}$'s have all the same dense domain in $\mathfrak{h}$ guarantees that the decomposition \eqref{eq:Bfdecomposable} of $B_f$ is unique and hence unambiguous: if one also had $B_f=\int_{\mathbb{R}}^{\oplus}B_f(\xi)\ud\,\xi$ for a map $\xi\mapsto B_f(\xi)$ with $\mathcal{D}(B_f(\xi))=\mathcal{D}(\overline{A_f(\xi)})=\mathcal{D}$, a common dense domain in $\mathfrak{h}$, then necessarily $\overline{A_f(\xi)}=B_f(\xi)$ for almost every $\xi\in\mathbb{R}$.

\begin{remark}
As suggestive as it would be, it is however important to observe that the operator of interest, $\mathscr{H}_f$, is \emph{not} decomposable as $\mathscr{H}_f=\int_\mathbb{R}^{\oplus}A_f(\xi)\,\ud \xi$. Indeed, the analogue of condition (i) in \eqref{ed:defB} would be satisfied, but condition (ii) would not. More precisely, by definition an element $\psi\in\mathcal{D}(\int_\mathbb{R}^{\oplus}A_f(\xi)\,\ud \xi)$ does satisfy the property $\psi(\cdot,\xi)\in\mathcal{D}(A_f(\xi))=C^\infty_c(\mathbb{R}^+)$ for every $\xi$, as is the case for the elements of $\mathcal{D}(\mathscr{H}_f)$, but it also satisfies the property
\begin{equation}\label{eq:Afpsi}
 \begin{split}
 +\infty\;&>\int_\mathcal{\mathbb{R}}\big\|A_f(\xi)\psi(\cdot,\xi)\big\|_{\mathfrak{h}}^2\,\ud\xi \\
 &=\iint_{\mathbb{R}^+\times\mathbb{R}}\Big|\Big(-\frac{\partial^2}{\partial x^2}+\frac{\xi^2}{f^2}+\frac{2ff''-f'^2}{4f^2}\Big)\psi(x,\xi)\Big|^2\ud x\,\ud\xi\,,
 \end{split}
\end{equation}
and \eqref{eq:Afpsi} does not necessarily imply that for every $x$ the function $\psi(x,\cdot)$ is the Fourier transform of a $C^\infty_0(\mathbb{R})$-function as it has to be for an element of $\mathcal{D}(\mathscr{H}_f)$. Condition \eqref{eq:Afpsi} is surely satisfied by other functions besides all those in $\mathcal{D}(\mathscr{H}_f)$. In fact, the same reasoning proves the (proper) inclusion
\begin{equation}\label{eq:BextendsHf}
 B_f\;\supset\; \mathscr{H}_f\,.
\end{equation}
\end{remark}

The operator $B_f$ is not just an extension of $\mathscr{H}_f$, it is a closed symmetric extension.

\begin{proposition}[\cite{Pozzoli_MSc2018}]\label{prop:BsymmetricAndClosed}~
\begin{itemize}
 \item[(i)] $B_f$ is symmetric.
 \item[(ii)] $B_f$ is closed.
\end{itemize}
\end{proposition}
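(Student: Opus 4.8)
\textbf{Proof plan for Proposition \ref{prop:BsymmetricAndClosed}.}

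The plan is to exploit the fact that $B_f$ is a direct integral of the \emph{self-adjoint-closure candidates} $\overline{A_f(\xi)}$, each of which is a densely defined, \emph{closed}, symmetric (in fact positive) operator on the common fibre $\mathfrak{h}=L^2(\mathbb{R}^+,\ud x)$. For part (i), symmetry of $B_f$ follows fibrewise: for $\psi,\varphi\in\mathcal{D}(B_f)$ one has, by \eqref{eq:scalar_products_fibred} and the definition \eqref{ed:defB},
\begin{equation*}
 \langle B_f\psi,\varphi\rangle_{\cH}\;=\;\int_{\mathbb{R}}\langle\overline{A_f(\xi)}\psi(\cdot,\xi),\varphi(\cdot,\xi)\rangle_{\mathfrak{h}}\,\ud\xi\;=\;\int_{\mathbb{R}}\langle\psi(\cdot,\xi),\overline{A_f(\xi)}\varphi(\cdot,\xi)\rangle_{\mathfrak{h}}\,\ud\xi\;=\;\langle\psi,B_f\varphi\rangle_{\cH}\,,
\end{equation*}
where the middle equality uses that each $\overline{A_f(\xi)}$ is symmetric and that both $\psi(\cdot,\xi)$ and $\varphi(\cdot,\xi)$ lie in its domain for a.e.\ $\xi$. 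One must only check that the integrands are genuinely integrable; this is immediate from Cauchy--Schwarz in $\mathfrak{h}$ combined with condition (ii) in \eqref{ed:defB} applied to both $\psi$ and $\varphi$, together with $\psi,\varphi\in\cH$.

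For part (ii), the cleanest route is to show $B_f$ is closed directly from the definition of closedness. Take a sequence $(\psi_n)\subset\mathcal{D}(B_f)$ with $\psi_n\to\psi$ in $\cH$ and $B_f\psi_n\to\eta$ in $\cH$; one wants $\psi\in\mathcal{D}(B_f)$ and $B_f\psi=\eta$. Passing to a subsequence, $\psi_n(\cdot,\xi)\to\psi(\cdot,\xi)$ in $\mathfrak{h}$ and $(\overline{A_f(\xi)}\psi_n(\cdot,\xi))=(B_f\psi_n)(\cdot,\xi)\to\eta(\cdot,\xi)$ in $\mathfrak{h}$ for a.e.\ $\xi$ (this uses the standard fact that $L^2$-convergence in $\cH=L^2(\mathbb{R},\ud\xi;\mathfrak{h})$ implies a.e.\ fibrewise $\mathfrak{h}$-convergence along a subsequence, plus a diagonal extraction to arrange it simultaneously for $\psi_n$ and $B_f\psi_n$). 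Since each $\overline{A_f(\xi)}$ is a \emph{closed} operator on $\mathfrak{h}$, we conclude $\psi(\cdot,\xi)\in\mathcal{D}(\overline{A_f(\xi)})$ and $\overline{A_f(\xi)}\psi(\cdot,\xi)=\eta(\cdot,\xi)$ for a.e.\ $\xi$, i.e.\ condition (i) of \eqref{ed:defB} holds. Condition (ii) then reads $\int_{\mathbb{R}}\|\eta(\cdot,\xi)\|_{\mathfrak{h}}^2\,\ud\xi=\|\eta\|_{\cH}^2<+\infty$, which is automatic since $\eta\in\cH$. Hence $\psi\in\mathcal{D}(B_f)$ and $B_f\psi=\eta$, so $B_f$ is closed.

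The main obstacle, such as it is, is the measurability/subsequence bookkeeping in part (ii): one must make sure that the fibrewise limits are taken along a \emph{single} subsequence that works simultaneously for the convergence of $\psi_n$ and of $B_f\psi_n$, and that the resulting fibre assignments $\xi\mapsto\psi(\cdot,\xi)$ and $\xi\mapsto\eta(\cdot,\xi)$ are the genuine (measurable) representatives of $\psi,\eta\in\cH$. This is handled by the usual Riesz-type extraction (an $L^2$-convergent sequence has an a.e.-convergent subsequence, applied in the scalar space $L^2(\mathbb{R},\ud\xi)$ to the numerical sequences $\|\psi_n(\cdot,\xi)-\psi(\cdot,\xi)\|_{\mathfrak{h}}$ and $\|(B_f\psi_n)(\cdot,\xi)-\eta(\cdot,\xi)\|_{\mathfrak{h}}$) followed by a diagonal argument. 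All remaining steps are the routine Cauchy--Schwarz and Fubini-type estimates. Alternatively one could invoke the general fact that a direct integral $\int^{\oplus}T(\xi)\,\ud\xi$ of closed operators with a common core structure is closed; but since \cite{Pozzoli_MSc2018} is cited and the fibre operators are explicitly closed by construction, the hands-on argument above is the most transparent and self-contained.
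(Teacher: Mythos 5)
Your proposal is correct and follows essentially the same route as the paper: symmetry is checked fibrewise via \eqref{eq:scalar_products_fibred} using the symmetry of each $\overline{A_f(\xi)}$, and closedness is obtained by extracting a subsequence converging fibrewise almost everywhere and invoking the closedness of each $\overline{A_f(\xi)}$, with condition (ii) of \eqref{ed:defB} automatic since the limit $\eta$ lies in $\cH$. The extra care you take with the diagonal subsequence extraction and the integrability check in the symmetry step only makes explicit what the paper leaves implicit.
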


\begin{proof}
 Symmetry is immediately checked by means of \eqref{eq:scalar_products_fibred}, thanks to the symmetry of each $\overline{A_f(\xi)}$. Concerning the closedness, let $(\psi_n)_{n\in\mathbb{N}}$, $\psi$, and $\Psi$ be, respectively, a sequence and two functions in $\mathcal{D}(B_f)$ such that $\psi_n\to\psi$ and $B\psi_n\to\Psi$ in $\cH$ as $n\to +\infty$. Thus,
 \[
  \begin{split}
   \int_{\mathbb{R}}\|\psi_n(\cdot,\xi)-\psi(\cdot,\xi)\|_{\mathfrak{h}}^2\,\ud \xi\;&\xrightarrow[]{\;n\to +\infty\;}0\,, \\
   \int_{\mathbb{R}}\big\|\overline{A_f(\xi)}\,\psi_n(\cdot,\xi)-\Psi(\cdot,\xi)\|_{\mathfrak{h}}^2\,\ud \xi\;&\xrightarrow[]{\;n\to +\infty\;}0\,,
  \end{split}
 \]
 which implies that, \emph{up to extracting a subsequence}, and for \emph{almost} every $\xi$, $\psi_n(\cdot,\xi)\to\psi(\cdot,\xi)$ and $\overline{A_f(\xi)}\,\psi_n(\cdot,\xi)\to\Psi(\cdot,\xi)$ in $\mathfrak{h}$ as $n\to +\infty$. Owing to the closedness of $\overline{A_f(\xi)}$, one must conclude that
 \[
  \psi(\cdot,\xi)\in\mathcal{D}(\overline{A_f(\xi)})\qquad\textrm{and}\qquad \overline{A_f(\xi)}\,\psi(\cdot,\xi)\;=\;\Psi(\cdot,\xi)
 \]
 for almost every $\xi$. Therefore,
\[
 \int_\mathcal{\mathbb{R}}\big\|\overline{A_f(\xi)}\psi(\cdot,\xi)\big\|_{\mathfrak{h}}^2\,\ud\xi\;=\;\|\Psi\|_{\cH}^2\;<\;+\infty.
\]
 Both conditions (i) and (ii) of \eqref{ed:defB} are satisfied, which proves that $\psi\in\mathcal{D}(B_f)$ and $B_f\psi=\Psi$, that is, the closedness of $B$. 
\end{proof}

\subsection{Self-adjointness of the auxiliary fibred operator}\label{sec:selfadjfibredop}~

The convenient feature of the auxiliary operator $B_f$ is the possibility of qualifying its self-adjointess in terms of the same property in each fibre.

One direction of this fact is the following application of the well-known property \cite[Theorem XIII.85(i)]{rs4}:

\begin{proposition}\label{prop:RSpropBA}
 If $A_f(\xi)$ is essentially self-adjoint for each $\xi\in\mathbb{R}$, then $B_f$ is self-adjoint.
\end{proposition}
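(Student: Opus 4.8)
The plan is to invoke the standard theory of direct integrals of self-adjoint operators, as codified in \cite[Theorem XIII.85]{rs4}. The key observation is that $B_f$ was \emph{defined} in \eqref{ed:defB}--\eqref{eq:Bfdecomposable} precisely as the direct integral $\int_{\mathbb{R}}^{\oplus}\overline{A_f(\xi)}\,\ud\xi$ of the family of closed operators $\{\overline{A_f(\xi)}\}_{\xi\in\mathbb{R}}$, all acting on the fixed fibre $\mathfrak{h}=L^2(\mathbb{R}^+,\ud x)$, all with the same dense domain, and that the measurability hypotheses needed to apply the direct-integral machinery hold here because the coefficient functions $\xi\mapsto \xi^2/f^2(x)$ depend on $\xi$ in a jointly measurable (indeed continuous) way, so $\xi\mapsto A_f(\xi)$ is a measurable family in the sense of \cite[Sect.~XIII.16]{rs4}.

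Concretely, I would proceed as follows. First I would recall that, by hypothesis, for each fixed $\xi\in\mathbb{R}$ the operator $A_f(\xi)$ is essentially self-adjoint on $C^\infty_c(\mathbb{R}^+)$; hence its closure $\overline{A_f(\xi)}$ is self-adjoint on $\mathfrak{h}$. Second, I would note that the map $\xi\mapsto\overline{A_f(\xi)}$ is a measurable family of self-adjoint operators: since all the closures share the common dense domain $\mathcal{D}\subset\mathfrak{h}$ (as recorded just after \eqref{eq:Afstar}), and since for $\psi\in C^\infty_c(\mathbb{R}^+)$ the vector $A_f(\xi)\psi$ depends measurably on $\xi$, the resolvents $(\overline{A_f(\xi)}+\ii)^{-1}$ form a measurable family of bounded operators on $\mathfrak{h}$, which is exactly the measurability requirement of \cite[Theorem XIII.85(i)]{rs4}. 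Third, that theorem asserts that $\int_{\mathbb{R}}^{\oplus}\overline{A_f(\xi)}\,\ud\xi$ is then self-adjoint on the domain specified by conditions (i)--(ii) of \eqref{ed:defB}, which is precisely $\mathcal{D}(B_f)$. Therefore $B_f$ is self-adjoint, as claimed.

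I do not expect a genuine obstacle here; the statement is essentially a transcription of the Reed--Simon result to the notation of the present paper. The only point requiring a word of care is the \emph{measurability} of the family $\xi\mapsto\overline{A_f(\xi)}$ — one must check that the abstract hypotheses of \cite[Theorem XIII.85]{rs4} are met rather than take them for granted — but this is immediate from the explicit form $A_f(\xi)=-\frac{\ud^2}{\ud x^2}+\frac{\xi^2}{f^2}+\frac{2ff''-f'^2}{4f^2}$, in which the $\xi$-dependence enters only through the continuous multiplication operator $\xi^2/f^2(x)$. One may also remark that this direction of the equivalence is the only one needed for the confinement half of the main theorems; the converse implication (self-adjointness of $B_f$ forcing essential self-adjointness of each fibre) will be addressed separately.
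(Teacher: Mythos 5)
Your proof is correct and follows exactly the route the paper takes: Proposition \ref{prop:RSpropBA} is stated there precisely as an application of \cite[Theorem XIII.85(i)]{rs4} to the direct integral $B_f=\int_{\mathbb{R}}^{\oplus}\overline{A_f(\xi)}\,\ud\xi$. Your additional verification of the measurability of the family $\xi\mapsto\overline{A_f(\xi)}$ is a welcome point of care that the paper leaves implicit.
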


Let us focus on the opposite direction.

\begin{proposition}[\cite{Pozzoli_MSc2018}]\label{prop:Bselfadj-implies-Axiselfadj}
 If $B_f$ is self-adjoint, then $A_f(\xi)$ is essentially self-adjoint for almost every $\xi\in\mathbb{R}$.
\end{proposition}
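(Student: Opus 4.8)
The plan is to argue by contradiction via deficiency indices. Suppose $B_f$ is self-adjoint but that the set $S\subset\mathbb{R}$ of parameters $\xi$ for which $A_f(\xi)$ fails to be essentially self-adjoint has positive Lebesgue measure. Since each $A_f(\xi)$ is a one-dimensional Schr\"{o}dinger operator of the form $-\frac{\ud^2}{\ud x^2}+q_\xi(x)$ with $q_\xi$ real and locally integrable on $(0,+\infty)$, Weyl's theory guarantees that its closure $\overline{A_f(\xi)}$ has deficiency indices either $(0,0)$ or $(1,1)$ (limit point versus limit circle at each of the two endpoints $0$ and $+\infty$, with at least the endpoint $+\infty$ always limit point here because of the potential growth or, failing that, because the deficiency index is at most $2$ and positivity restricts the count). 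Hence for $\xi\in S$ the defect space $\ker\big(\overline{A_f(\xi)}^*-\ii\big)=\ker\big(A_f(\xi)^*-\ii\big)$ is one-dimensional, spanned by the (unique up to scalar) $L^2(\mathbb{R}^+,\ud x)$-solution $u_\xi$ of $A_f(\xi)^*u=\ii u$, which by Lemma \ref{lemma:Hfstar}-type reasoning (see \eqref{eq:Afstar}) is the maximal differential operator.

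The next step is to produce, from this fibrewise defect, a genuine element of $\ker(B_f^*-\ii)$, thereby contradicting self-adjointness of $B_f$. For this I would first check that $\xi\mapsto u_\xi$ can be chosen measurably: fixing a normalization, e.g. requiring $u_\xi$ to agree with a prescribed solution of the ODE determined by initial-type data at an interior point, the solution depends measurably (indeed continuously, by ODE theory with parameters) on $\xi$, and one can normalize $\|u_\xi\|_{\mathfrak h}=1$ on $S$. Then pick any measurable $g\in L^2(\mathbb{R},\ud\xi)$ supported in $S$ with $0<\|g\|<\infty$ and a bounded support $S_0\subseteq S$ of positive measure on which the construction is uniform enough, and set $\psi(x,\xi):=g(\xi)u_\xi(x)$. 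One verifies $\psi\in\cH$ (Fubini, using $\|u_\xi\|_{\mathfrak h}=1$), and then that $\psi$ lies in $\mathcal{D}(B_f^*)$ with $B_f^*\psi=\ii\psi$. To identify $B_f^*$ I would use that $B_f=\int_\mathbb{R}^\oplus\overline{A_f(\xi)}\,\ud\xi$ is a decomposable operator with all fibres sharing the common dense domain $\mathcal D$, so that its adjoint is again decomposable, $B_f^*=\int_\mathbb{R}^\oplus \overline{A_f(\xi)}^*\,\ud\xi=\int_\mathbb{R}^\oplus A_f(\xi)^*\,\ud\xi$; this is the adjoint analogue of \cite[Theorem XIII.85]{rs4} and follows from the uniqueness-of-decomposition remark in the excerpt applied to $B_f^*$. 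Granting this, $\psi(\cdot,\xi)=g(\xi)u_\xi\in\ker(A_f(\xi)^*-\ii)$ for a.e.\ $\xi$, and $\int\|A_f(\xi)^*\psi(\cdot,\xi)\|_{\mathfrak h}^2\,\ud\xi=\int|g(\xi)|^2\|u_\xi\|_{\mathfrak h}^2\,\ud\xi=\|g\|^2<\infty$, so $\psi\in\mathcal D(B_f^*)$ and $B_f^*\psi=\ii\psi$ with $\psi\neq0$. Hence $B_f$ has nonzero deficiency index, contradicting self-adjointness; therefore $|S|=0$.

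The main obstacle I expect is the measurable-selection step together with the precise identification of $B_f^*$ as a decomposable operator: one must know that the fibre defect solutions $u_\xi$ can be chosen to depend measurably on $\xi$ (continuity in $\xi$ of solutions to the second-order linear ODE with $\xi$-dependent coefficients, valid here since $\xi^2/f^2$ and the potential term depend smoothly on $\xi$ away from $x=0$, handles this once an interior normalization point is fixed) and, more delicately, that forming the adjoint commutes with the direct-integral decomposition. The latter is standard for decomposable operators but relies on the common-domain structure; the cleanest route is to invoke \cite[Theorem XIII.85]{rs4} in the form that characterizes self-adjointness of $\int^\oplus\overline{A_f(\xi)}\,\ud\xi$ as equivalent to essential self-adjointness of $A_f(\xi)$ for a.e.\ $\xi$, of which the present proposition is precisely one implication. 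Everything else — positivity, the Weyl alternative for the fibre operators, and the Fubini-type integrability checks — is routine.
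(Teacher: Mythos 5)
Your argument goes in a genuinely different direction from the paper's. The paper's proof is a short direct argument: self-adjointness of $B_f$ gives $\mathrm{ran}(B_f+\ii)=\cH$; writing $\varphi=(B_f+\ii)\psi_\varphi$ and reading this identity fibrewise for $\varphi$ ranging over (a countable dense subset of) $C^\infty_c(\mathbb{R}^+_x\times\mathbb{R}_\xi)$ shows that $\mathrm{ran}(\overline{A_f(\xi)}+\ii\mathbbm{1})$ is dense in $\mathfrak{h}$ for almost every $\xi$, whence essential self-adjointness of $A_f(\xi)$. Your contrapositive via fibred deficiency vectors is also viable --- indeed it is essentially the argument the paper itself uses later, in the proof of the infinite deficiency index, where $\Phi_J(x,\xi)=\varphi_\xi(x)\mathbf{1}_J(\xi)$ is shown to lie in $\ker(\mathscr{H}_f^*-\ii\mathbbm{1})$ --- and the only fact about $B_f^*$ you actually need is the easy inclusion $\int_{\mathbb{R}}^{\oplus}A_f(\xi)^*\,\ud\xi\subset B_f^*$, which follows from a direct Fubini computation of $\langle\psi,B_f\phi\rangle_{\cH}$; you do not need, and should not assert without proof, the full identity between the adjoint of a decomposable operator and the decomposable field of adjoints. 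Your closing fallback of invoking a Reed--Simon-type equivalence ``of which the present proposition is precisely one implication'' is circular and should be dropped.

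Two genuine gaps remain in your route, both of which the paper's argument avoids entirely. First, measurability of the bad set $S$: the negation of ``for almost every $\xi$'' only gives $S$ positive \emph{outer} measure, whereas your construction requires a measurable subset of positive measure. Here this is rescued by the structure of the problem: since $f\geqslant\kappa>0$ near $x=0$, the term $\xi^2/f^2$ is bounded near the origin, so the limit-point/limit-circle classification at $0$ is independent of $\xi$ (and $+\infty$ is always limit point), whence $S$ is either $\varnothing$ or all of $\mathbb{R}$ --- but this observation has to be made explicitly. Second, and more seriously, the measurable selection of $u_\xi$: in the limit-circle-at-zero case the defect space is spanned by the unique (up to scalars) solution of $A_f(\xi)^*u=\ii u$ that is square-integrable at $+\infty$ (all solutions are $L^2$ near $0$), and this solution is \emph{not} determined by initial data at an interior point, so the normalization you propose does not select it. What you actually need is measurable (say, continuous) dependence on $\xi$ of the Weyl solution at infinity, equivalently of the Weyl $m$-coefficient; this is true but is a nontrivial fact requiring its own proof. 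As it stands this step is an acknowledged but unclosed hole, and it is precisely the burden that the paper's range-density argument sidesteps.
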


\begin{proof}
 It follows by assumption that for any $\varphi\in\cH$ there exists $\psi_\varphi\in\mathcal{D}(B_f)$ with $\varphi=(B_f+\ii)\psi_\varphi$. Thus, as an identity in $\mathfrak{h}$,
 \[
  \varphi(\cdot,\xi)\;=\;\big(\overline{A_f(\xi)}+i\big)\,\psi_\varphi(\cdot,\xi)\qquad\textrm{for almost every }\,\xi\,.
 \]
 In particular, let us run $\varphi$ over all the $C^\infty_c(\mathbb{R}^+_x\times\mathbb{R}_\xi)$-functions and let us fix $\xi_0\in\mathbb{R}$: then obviously $\varphi(\cdot,\xi_0)$ spans the whole space of $C^\infty_c(\mathbb{R}^+_x)$-functions, which is a dense of $\mathfrak{h}$: with this choice the above identity implies that $\mathrm{ran}(\overline{A_f(\xi_0)}+i\mathbbm{1}\big)$ is dense in $\mathfrak{h}$ and hence $A_f(\xi_0)$ is essentially self-adjoint. 
\end{proof}


In turn, the essential self-adjointness of $A_f(\xi)$ can be now studied by means of very classical methods.

\subsection{Weyl's analysis in each fibre}~

Let us re-write 
\begin{equation}
 A_f(\xi)\;=\;-\frac{\ud^2}{\ud x^2}+W_{\xi,f}\,,\qquad W_{\xi,f}(x)\;:=\;\frac{\xi^2}{f^2}+\frac{2ff''-f'^2}{4f^2}\,.
\end{equation}
Owing to assumptions \eqref{eq:assumtpions_f}, $W_{\xi,f}$ is a non-negative continuous function on $\mathbb{R}^+$. With the choice $f(x)=x^{-\alpha}$ it takes the form
\begin{equation}
 W_{\xi,\alpha}\;=\;\xi^2 x^{2\alpha}+\frac{\,\alpha(2+\alpha)\,}{4x^2}\,.
\end{equation}

The essential self-adjointness of $A_f(\xi)$ is controlled by Weyl's limit-point/limit-circle analysis \cite[Sect.~X.1]{rs2}. Thanks to the continuity and non-negativity of $W_{\xi,\alpha}$, $A_f(\xi)$ is always in the limit point \emph{at infinity} -- it suffices to take $M(x)=x^2$ in \cite[Theorem X.8]{rs2} -- so the analysis is boiled down to the sole behaviour \emph{at zero}. Here one has two possibilities:
\begin{itemize}
 \item if $2ff''-f'^2\geqslant 3 x^{-2} f^2$, then $W_{\xi,\alpha}(x)\geqslant\frac{3}{4x^2}$, in which case $A_f(\xi)$ is in the limit point at zero \cite[Theorem X.10]{rs2};
 \item if instead  $2ff''-f'^2\leqslant (3-\varepsilon) x^{-2} f^2$ for some $\varepsilon>0$, since $f^{-2}\leqslant\kappa^{-2}$ around $x=0$, then $W_{\xi,\alpha}(x)\leqslant\kappa^{-2}\xi^2+(3-\varepsilon)x^{-2}$, whence also, for some $\xi$-dependent $\widetilde\varepsilon\in(0,\varepsilon)$, $W_{\xi,\alpha}(x)\leqslant(3-\widetilde{\varepsilon})x^{-2}$: in this case $A_f(\xi)$ is in the limit circle at zero \cite[Theorem X.10]{rs1}.
\end{itemize}

Weyl's criterion \cite[Theorem X.7]{rs1} then leads to the following conclusion.

\begin{proposition}\label{prop:Axiselfadjointness}
 Let $\xi\in\mathbb{R}$ and let $f$ satisfy assumptions \eqref{eq:assumtpions_f}.
 \begin{itemize}
  \item[(i)] If $2ff''-f'^2\geqslant 3 x^{-2} f^2$, then $A_f(\xi)$ is essentially self-adjoint.
  \item[(ii)] If $2ff''-f'^2\leqslant (3-\varepsilon) x^{-2} f^2$ for some $\varepsilon>0$, then $A_f(\xi)$ is not essentially self-adjoint and admits a one-real-parameter family of self-adjoint extensions. 
 \end{itemize}
\end{proposition}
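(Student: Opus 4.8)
The plan is to reduce the essential self-adjointness of the Schrödinger operator $A_f(\xi)=-\frac{\ud^2}{\ud x^2}+W_{\xi,f}$ on $C^\infty_c(\mathbb{R}^+)$ to Weyl's limit-point/limit-circle dichotomy applied separately at the two endpoints $0$ and $+\infty$ of the interval $(0,+\infty)$, invoking \cite[Theorem X.7]{rs1}: $A_f(\xi)$ is essentially self-adjoint if and only if the differential expression is in the limit-point case at both endpoints, and if it is limit-circle at one endpoint (and limit-point at the other) then the deficiency indices are $(1,1)$, giving a one-real-parameter family of self-adjoint extensions. The analysis is therefore split into three separate tasks: the endpoint at infinity, the endpoint at zero in case (i), and the endpoint at zero in case (ii).

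First I would dispose of the endpoint $+\infty$. Since $W_{\xi,f}$ is continuous and non-negative on $\mathbb{R}^+$ (this is exactly what assumptions \eqref{eq:assumtpions_f}, in particular (i)--(iv), guarantee, as already noted in the text), one applies the classical criterion \cite[Theorem X.8]{rs2} with the choice $M(x)=x^2$: the hypothesis $W_{\xi,f}(x)\geqslant -C M(x)$ holds trivially with $C=0$, $M(x)=x^2$ satisfies $\int^{\infty}M(x)^{-1/2}\ud x=+\infty$, and $M'/M^{3/2}$ is bounded, so the expression is in the limit-point case at $+\infty$ for every $\xi\in\mathbb{R}$ and every admissible $f$. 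Thus in all cases only the endpoint $0$ remains to be analysed.

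For case (i), I would observe that under $2ff''-f'^2\geqslant 3x^{-2}f^2$ one has, pointwise near $0$, the lower bound $W_{\xi,f}(x)\geqslant \frac{2ff''-f'^2}{4f^2}\geqslant \frac{3}{4x^2}$ (the term $\xi^2/f^2$ being non-negative and hence only helping). The comparison potential $\frac{3}{4x^2}$ sits exactly at the borderline $\frac{c}{x^2}$ with $c=\frac34\geqslant\frac34$ of the limit-point criterion at an endpoint where the potential blows up like an inverse square \cite[Theorem X.10]{rs2}; hence $A_f(\xi)$ is in the limit-point case at $0$. Combined with the limit-point case at $+\infty$, Weyl's criterion yields essential self-adjointness, proving (i). For case (ii), the hypothesis $2ff''-f'^2\leqslant(3-\varepsilon)x^{-2}f^2$ gives $\frac{2ff''-f'^2}{4f^2}\leqslant\frac{3-\varepsilon}{4x^2}$; using assumption \eqref{eq:assumtpions_f}(ii), namely $f\geqslant\kappa>0$ near $0$ so that $\xi^2/f^2\leqslant\kappa^{-2}\xi^2$ is bounded near $0$, one absorbs this bounded term into the inverse-square term at the cost of shrinking $\varepsilon$ to some $\xi$-dependent $\widetilde\varepsilon\in(0,\varepsilon)$, obtaining $W_{\xi,f}(x)\leqslant\frac{3-\widetilde\varepsilon}{4x^2}$ (after again using $f\geqslant\kappa$ to control the bounded part uniformly), i.e. the potential is strictly below the borderline $\frac{3}{4x^2}$ near $0$. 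By the limit-circle half of \cite[Theorem X.10]{rs1} the expression is in the limit-circle case at $0$, while it remains limit-point at $+\infty$; hence the deficiency indices are $(1,1)$ and $A_f(\xi)$ admits a one-real-parameter family of self-adjoint extensions, proving (ii).

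The only genuinely delicate point is the sharp constant $\frac34$ in the inverse-square comparison at $x=0$: one must be careful that the threshold for the limit-point/limit-circle transition for $-\frac{\ud^2}{\ud x^2}+\frac{c}{x^2}$ is precisely $c=\frac34$, that this is the reason the number $3$ (rather than, say, $1$) appears in conditions \eqref{eq:fcondition_conf}--\eqref{eq:fcondition_noconf}, and that in case (ii) the reduction from the full $W_{\xi,f}$ to a pure multiple of $x^{-2}$ really does land strictly below $\frac34 x^{-2}$ — here assumption \eqref{eq:assumtpions_f}(ii) is essential, since without the lower bound $f\geqslant\kappa$ near $0$ the term $\xi^2/f^2$ could itself blow up and spoil the comparison. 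Everything else is a direct citation of the limit-point/limit-circle machinery of \cite[Sect.~X.1]{rs2,rs1}.
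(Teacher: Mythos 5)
Your proof is correct and follows essentially the same route as the paper: limit point at infinity via \cite[Theorem X.8]{rs2} with $M(x)=x^2$, the sharp $\tfrac{3}{4}x^{-2}$ comparison of \cite[Theorem X.10]{rs2} at $x=0$ in both directions (absorbing the bounded term $\xi^2/f^2\leqslant\kappa^{-2}\xi^2$ at the cost of shrinking $\varepsilon$ in case (ii)), and Weyl's alternative \cite[Theorem X.7]{rs1} to conclude. The only difference is cosmetic: you spell out the deficiency-index count $(1,1)$ and the role of the threshold constant a bit more explicitly than the paper does.
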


The two alternatives in Proposition \ref{prop:Axiselfadjointness} are not mutually exclusive for generic admissible $f$'s, but they are when $f(x)=x^{-\alpha}$, for in this case
\[
 \frac{2ff''-f'^2}{4f^2}\;=\;\frac{\,\alpha(2+\alpha)\,}{x^2}
\]
and the possibilities are only $0<\alpha<1$ and $\alpha\geqslant 1$. The conclusion is therefore:

\begin{corollary}\label{cor:alpha}
 Let $\xi\in\mathbb{R}$. The operator $A_\alpha(\xi)$ is essentially self-adjoint if and only if $\alpha\geqslant 1$.
\end{corollary}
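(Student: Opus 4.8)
The plan is to deduce the statement directly from Proposition \ref{prop:Axiselfadjointness} specialised to the profile $f(x)=x^{-\alpha}$, for which the two alternatives listed there turn out to be both mutually exclusive and jointly exhaustive over the admissible range $\alpha\in[0,+\infty)$. First I would record the explicit shape of the zeroth-order coefficient: with $f(x)=x^{-\alpha}$ one computes $f'(x)=-\alpha x^{-\alpha-1}$ and $f''(x)=\alpha(\alpha+1)x^{-\alpha-2}$, whence
\[
 2ff''-f'^2\;=\;\big(2\alpha(\alpha+1)-\alpha^2\big)\,x^{-2\alpha-2}\;=\;\alpha(\alpha+2)\,x^{-2\alpha-2}\;=\;\alpha(\alpha+2)\,x^{-2}f^2\,,
\]
which is the identity already used in Section \ref{sec:selfadjfibredop}. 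Thus the Weyl behaviour of $A_\alpha(\xi)$ at the origin is governed entirely by the comparison of the constant $\alpha(\alpha+2)$ with $3$.

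Next I would solve that comparison. The inequality $\alpha(\alpha+2)\geqslant 3$ is equivalent to $\alpha^2+2\alpha-3\geqslant 0$, i.e.\ $(\alpha-1)(\alpha+3)\geqslant 0$; since $\alpha\geqslant 0$ forces $\alpha+3>0$, this holds precisely when $\alpha\geqslant 1$. For such $\alpha$ the hypothesis $2ff''-f'^2\geqslant 3x^{-2}f^2$ of Proposition \ref{prop:Axiselfadjointness}(i) is met, so $A_\alpha(\xi)$ is essentially self-adjoint, which proves the `if' implication (with no dependence on $\xi$). For the converse, if $\alpha\in[0,1)$ then $\alpha(\alpha+2)<3$; I would set $\varepsilon:=3-\alpha(\alpha+2)>0$, so that $2ff''-f'^2=(3-\varepsilon)x^{-2}f^2$ holds for every $x\neq 0$, which is exactly the hypothesis of Proposition \ref{prop:Axiselfadjointness}(ii). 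Hence $A_\alpha(\xi)$ fails to be essentially self-adjoint (indeed it carries a one-real-parameter family of self-adjoint extensions). Combining the two cases gives the asserted equivalence.

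There is essentially no obstacle to overcome: the only genuine point is the elementary observation that the quadratic $\alpha(\alpha+2)-3$ is negative on $[0,1)$ and non-negative on $[1,+\infty)$, so that the two regimes exhaust all admissible $\alpha$ and Proposition \ref{prop:Axiselfadjointness} applies as a clean dichotomy rather than leaving a gap. In contrast to a general admissible $f$, here the two defining inequalities of Proposition \ref{prop:Axiselfadjointness}(i)--(ii) are saturated---they hold with equality for every $x\neq 0$---which is precisely why the cases are complementary and the equivalence, rather than merely a pair of one-sided implications, can be asserted.
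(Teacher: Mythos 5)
Your argument is correct and is essentially the paper's own: the paper likewise computes $2ff''-f'^2=\alpha(2+\alpha)\,x^{-2}f^2$ for $f(x)=x^{-\alpha}$ and observes that the two alternatives of Proposition \ref{prop:Axiselfadjointness} then become mutually exclusive and exhaustive, with the quadratic comparison $\alpha(\alpha+2)\gtrless 3$ yielding the threshold $\alpha=1$. The only quibble is a phrasing one: the inequalities of the Proposition are not ``saturated'' for $\alpha\neq 1$; what makes the dichotomy clean is that the ratio $(2ff''-f'^2)/(x^{-2}f^2)$ is the constant $\alpha(\alpha+2)$, so exactly one alternative holds.
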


\section{Proofs of the main results}\label{sec:proofs}

Let us present in this Section the proofs of our main theorems.

\subsection{Geodesic incompleteness}\label{sec:incompleteness}~

The very fact that each $M_\alpha$, when $\alpha>0$, is geodesically incomplete is straightforward, for $M_\alpha$ is obviously incomplete as a metric space (which can be seen by the non-convergent Cauchy sequence of points $(n^{-1},y_0)\in M$ as $n\to\infty$), and the conclusion then follows from a standard Hopf-Rinow theorem 
\cite[Theorem 2.8, Chapter 7]{DoCarmo-Riemannian}.

More interestingly, as is shown now, it is not just possible to find \emph{one} geodesic curve that passes through a given arbitrary point $(x_0,y_0)\in M$ and reaches the boundary $\partial M$ in finite time in the past or in the future -- which is in fact geodesic incompleteness and in the present case is trivially seen by considering the horizontal line $y=y_0$ in the discussion that follows -- but furthermore it can be proved that \emph{all} geodesics passing through $(x_0,y_0)$ at $t=0$ intercept the $y$-axis at finite times $t_\pm$ with $t_-<0<t_+$, with the sole exception of the geodesic line $y=y_0$ along which the boundary is reached only in one direction of time.

Let us recall that, as a consequence of Pontryagin's maximum principle (see, e.g., \cite[Sect.~3.4]{Agrachev-Barilari-Boscain-subriemannian} or \cite[Sect.~2.2]{Boscain-Laurent-2013}), the geodesics on $M_\alpha$ are projections onto $M$ of solutions to the Hamilton equations associated with the Hamiltonian that with respect to the orthonormal frame \eqref{eq:frame} reads
\begin{equation}\label{eq:Hamiltonian_for_geodesics}
 h_\alpha(x,y;P_x,P_y)\;:=\;\frac{1}{2}\big(\langle X_1,P\rangle_{\mathbb{R}^2}^2+\langle X_2^{(\alpha)},P\rangle_{\mathbb{R}^2}^2\big) \;=\;\frac{1}{2}\big(P_x^2+x^{2\alpha} P_y^2\big)\,,
 \end{equation}
where $P:=(P_x,P_y)\in T^*_{(x,y)}M$ is the vector of the momenta associated with the coordinates $(x,y)$. The corresponding Hamiltonian system is therefore
\begin{equation}\label{eq:Hamiltoniansystem}
 \begin{split}
  \dot{x}\;&=\;P_x\,,\qquad\qquad \dot{P}_x\;=\;-\alpha x^{2\alpha-1}\,, \\
  \dot{y}\;&=\;x^{2\alpha} P_y\,,\qquad\;\, \dot{P}_y\;=\;0\,.
 \end{split}
\end{equation}
The local existence and uniqueness of a solution to \eqref{eq:Hamiltoniansystem} with prescribed values of $(x,y)$ and $(P_x,P_y)$ at $t=0$ is standard.

\begin{figure}
\begin{center}
\includegraphics[width=5cm]{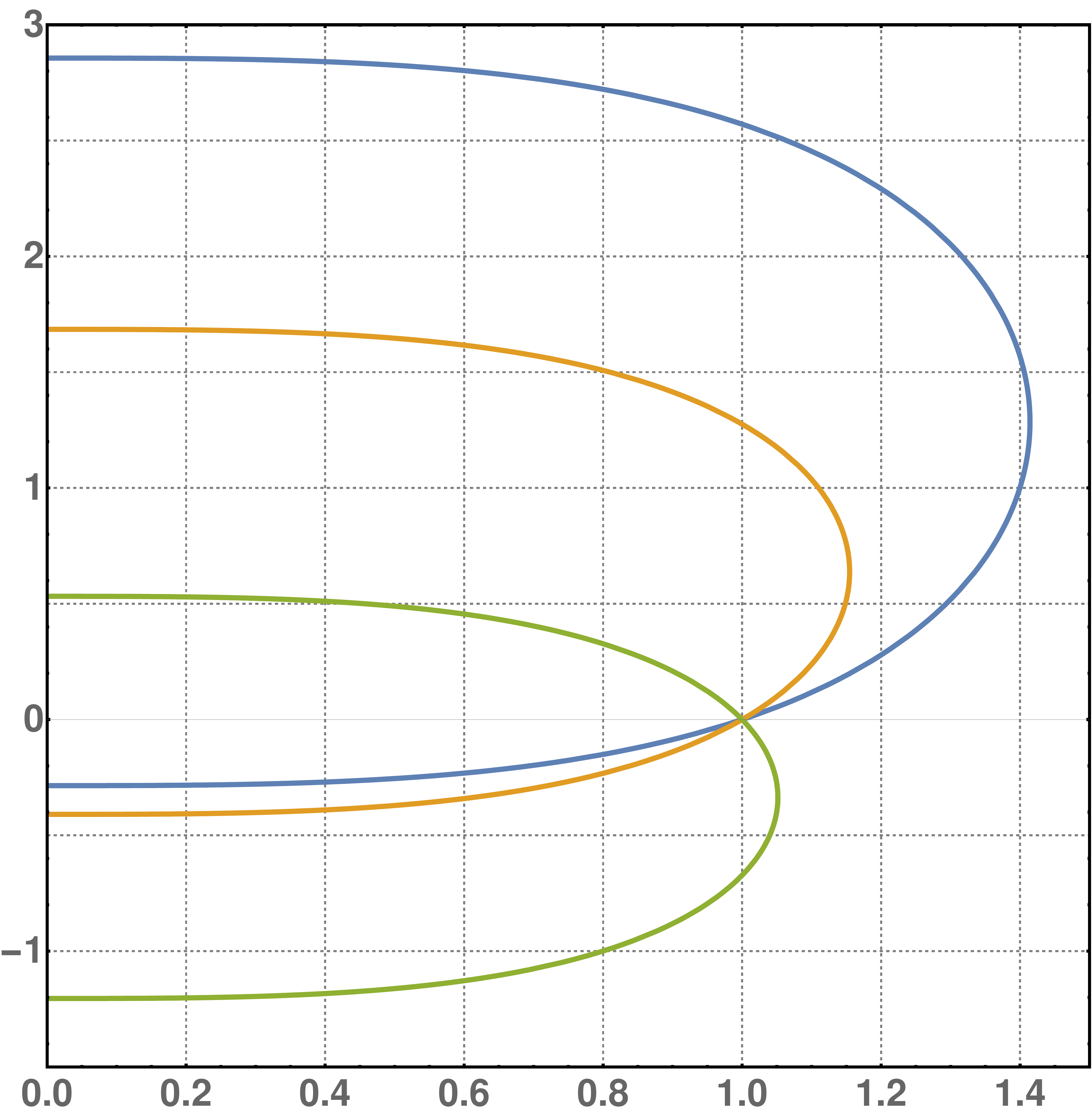}\quad\includegraphics[width=5cm]{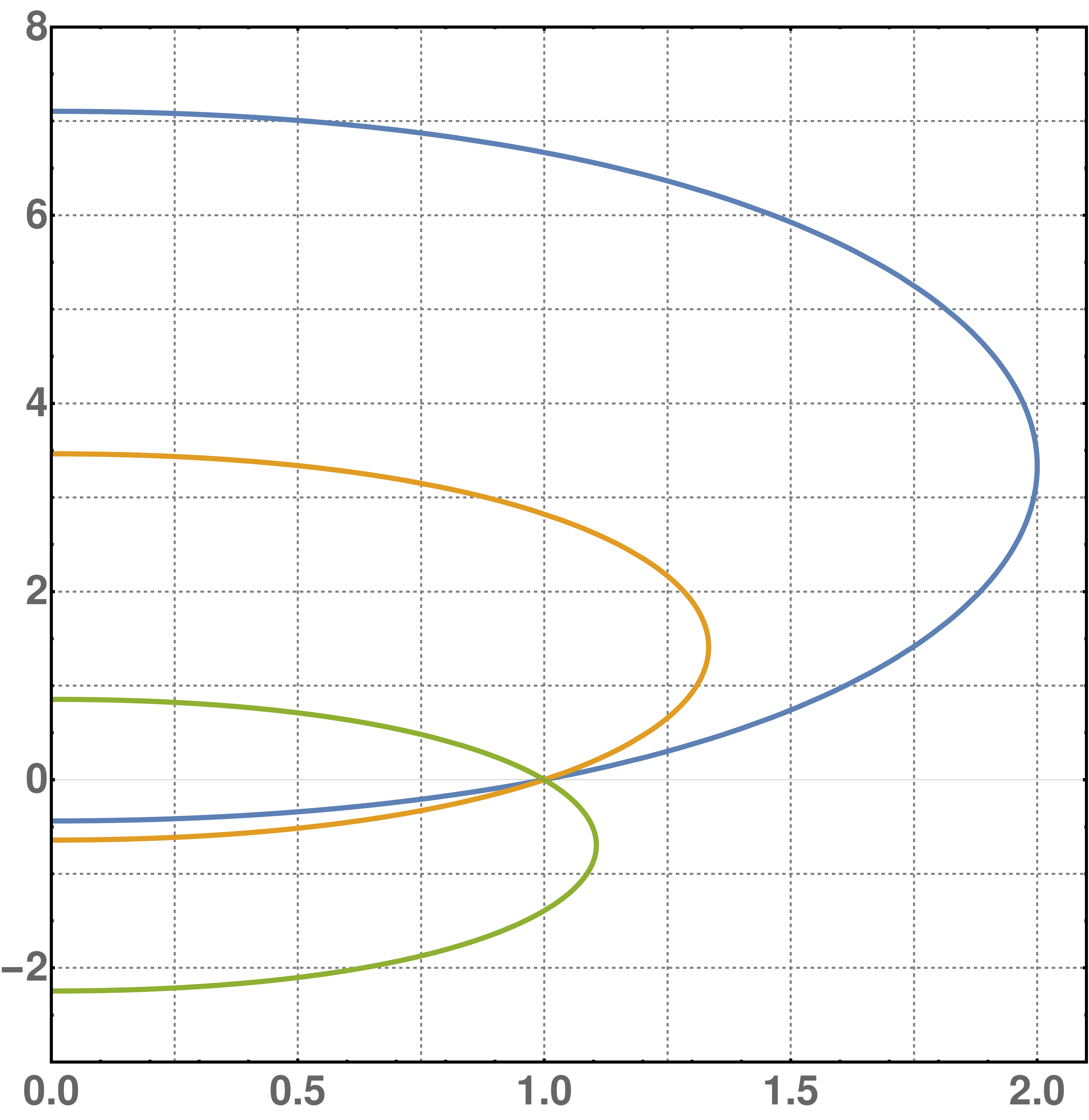}
\caption{Geodesics obtained from \eqref{eq:system_for_geodesics} for various choices of $\theta$. Left: $\alpha=\frac{1}{2}$. Right: $\alpha=1$.} \label{fig:geodesics}
\end{center}
\end{figure}

One deduces from \eqref{eq:Hamiltoniansystem} that the geodesic passing at $t=0$ through the point $(x(0),y(0))=(1,0)\in M$ with direction $(\dot{x}(0),\dot{y}(0))=(\cos\theta,\sin\theta)$, for fixed $\theta\in[0,2\pi)$, is the solution $(x(t),y(t))$ to
\begin{equation}\label{eq:system_for_geodesics}
 \begin{cases}
  \;\ddot{x}+\alpha\,\sin^2\theta\,x^{2\alpha-1}\,=\,0\,,\quad & x(0)=1\,,\quad \dot{x}(0)=\cos\theta \,, \\
  \;y(t)\,=\,\sin\theta\int_0^t x(\tau)^{2\alpha}\,\ud \tau\,.
 \end{cases}
\end{equation}

The exceptional cases $\theta=0$ and $\theta=\pi$ yield, respectively, the geodesics $(1+t,0)$ and $(1-t,0)$, both reaching the boundary $\partial M$, respectively at the instants $t=-1$ and $t=1$.

Generically, there are two instants $t_\pm$ with $t_-<0<t_+$ such that $x(t_\pm)=0$ (Fig.~\ref{fig:geodesics}).

This is seen as customary by exploiting the conservation of $h_\alpha$ along each geodesic, that is, the conservation of the quantity
\begin{equation}\label{eq:Exxdot}
E(x,\dot x) \; := \; \frac{1}{2}\big(\dot{x}^2+ \sin^2 \theta \,x^{2 \alpha}\big)\;=\;\frac{1}{2}
\end{equation}
computed from \eqref{eq:Hamiltonian_for_geodesics}-\eqref{eq:Hamiltoniansystem} with the initial values $(x(0),y(0))=(1,0)$, $(\dot{x}(0),\dot{y}(0))=(\cos\theta,\sin\theta)$, or equivalently computed from \eqref{eq:system_for_geodesics}. Suitably interpreting the sign of $\dot{x}$ and integrating the identity
\begin{equation}
 \ud t\;=\;\pm \big(2 E(x(0),\dot{x}(0))-\sin^2 \theta\, x^{2\alpha}\big)^{-\frac{1}{2}}\ud x
\end{equation}
obtained from \eqref{eq:Exxdot}, one computes the time $T(x_{\mathrm{in}}\to x_{\mathrm{fin}})$ needed for $x(t)$ to reach a final value $x_{\mathrm{fin}}$ from and initial value $x_{\mathrm{in}}$ along a geodesic $\gamma$.
Thus,
\begin{itemize}
 \item if $\cos\theta\leqslant 0$, then $\dot{x}(t)=\cos^2\theta-\alpha\sin^2\int_0^t x(\tau)^{2\alpha-1}\ud\tau<0$, and therefore
 \[
  \begin{split}
   T\;&=\;-\int_{1}^0\frac{ \ud x}{\sqrt{1-\sin^2 \theta \, x^{2\alpha}}} \;\leqslant\; \int_0^1 \frac{\ud x}{\sqrt{1-x^{2\alpha}}} \\
   &=\;\frac{1}{\alpha}\int_0^1  \frac{ \ud s}{\,s^{1-\frac{1}{\alpha}} \sqrt{1- s}\sqrt{1+s}}\;<\;+\infty\,;
  \end{split}
 \]
 \item if $\cos \theta > 0$ and $\sin \theta \neq 0$, then $\dot{x}$ changes sign at the critical point $x=x_c:=|\sin\theta|^{-\frac{1}{\alpha}}$ and the above computation gets modified as
\begin{equation*}
\begin{split}
T\;&=\; \int_1^{x_c} \frac{\ud x}{\sqrt{1-\sin^2 \theta \,x^{2 \alpha}}} - \int_{x_c}^0 \frac{\ud x}{\sqrt{1-\sin^2 \theta \,x^{2 \alpha}}}\\
&= \;\int_0^{|\sin\theta|^{-\frac{1}{\alpha}}}\!\!\!\frac{\ud x}{\sqrt{1-\sin^2\theta\,x^{2\alpha}}\,}\;=\; \frac{1}{|\sin \theta|^{\frac{1}{\alpha}}} \int_0^1 \frac{\ud s}{\sqrt{1-s^{2\alpha}}} \;<\;+\infty\,.
\end{split}
\end{equation*}
 \end{itemize}

This argument shows the finiteness of the above-mentioned positive instant $t_+$ of reach of $\partial M$, and the finiteness of $t_-$ follows by the same argument reverting the sign of $t$ in the equations.

Clearly the choice of the initial point $(1,0)$ is non-restrictive and \emph{any} other point $(x_0,y_0)\in M$ can be treated the same way, thanks to the translational invariance of the metric along the $y$-direction.

\subsection{Absence of geometric confinement}\label{sec:no_confinement}~

We already argued in Section \ref{sec:unitarily_equiv} that it is equivalent to study the essential self-adjointness in $\cH=L^2(\mathbb{R}^+\times\mathbb{R},\ud x\,\ud\xi)$ of the operator $\mathscr{H}_f$ defined in \eqref{eq:Hscrf}.

Let us work here in the regime $2ff''-f'^2\leqslant (3-\varepsilon) x^{-2} f^2$ for some $\varepsilon>0$, or in particular, when $f(x)=x^{-\alpha}$, the regime $\alpha\in(0,1)$.

Proposition \ref{prop:Axiselfadjointness}(ii) (in particular, Corollary \ref{cor:alpha}) then show that for no $\xi\in\mathbb{R}$ can $A_f(\xi)$ be essentially self-adjoint. Owing to Proposition \ref{prop:Bselfadj-implies-Axiselfadj}, the auxiliary operator $B_f$ defined in \eqref{ed:defB} is not self-adjoint.

On the other hand, $B_f$ is a closed symmetric extension of $\mathscr{H}_f$, owing to \eqref{eq:BextendsHf} and to Proposition \ref{prop:BsymmetricAndClosed}, whence $\overline{\mathscr{H}_f}\subset B_f$.

Now, if $\mathscr{H}_f$ was essentially self-adjoint, it could not be $\overline{\mathscr{H}_f}= B_f$, because this would violate the lack of self-adjointness of $B_f$. But it could not happen either that $B_f$ is a \emph{proper} extension of $\overline{\mathscr{H}_f}$, because self-adjoint operators are maximally symmetric.

Therefore, $\mathscr{H}_f$ is not essentially self-adjoint. In this regime the Grushin plane does not provide geometric quantum confinement.

Theorems \ref{thm:no_confinement} and \ref{thm:generalisation_to_f}(ii) are thus proved.

\subsection{Presence of geometric confinement}\label{sec:confimenent}~

Let us work now in the regime $2ff''-f'^2\geqslant 3 x^{-2} f^2$, or in particular, when $f(x)=x^{-\alpha}$, the regime $\alpha\in[1,+\infty)$.

Proposition \ref{prop:Axiselfadjointness}(i) (in particular, Corollary \ref{cor:alpha}) then shows that for all $\xi\in\mathbb{R}$ the operator $A_f(\xi)$ is essentially self-adjoint, and therefore, owing to Proposition \ref{prop:RSpropBA}, the auxiliary operator $B_f$ is self-adjoint.

Let us now argue that in the present regime one has
\begin{equation}\label{eq:Hfstar_in_Bf}
 \mathscr{H}_f^*\;\subset\;B_f\,.
\end{equation}
For \eqref{eq:Hfstar_in_Bf} it is sufficient to prove that $\mathcal{D}(\mathscr{H}_f^*)\subset\mathcal{D}(B_f)$, for the differential action of the two operators is the same, as is evident from Lemma \ref{lemma:Hfstar}.

For generic $F\in\mathcal{D}(\mathscr{H}_f^*)$ formula \eqref{eq:Hfstar} gives
\[\tag{*}
\begin{split}
 +\infty\;&>\;\Big\|\Big(-\frac{\partial^2}{\partial x^2}+\frac{\xi^2}{f^2}+\frac{2ff''-f'^2}{4f^2}\Big)F\Big\|^2_{\cH} \\
 &=\;\int_{\mathbb{R}}\ud\xi\, \Big\|\Big(-\frac{\ud^2}{\ud x^2}+\frac{\xi^2}{f^2}+\frac{2ff''-f'^2}{4f^2}\Big)F(\cdot,\xi)\Big\|^2_{\mathfrak{h}} \,,
\end{split}
\]
whence
\[
 \Big\|\Big(-\frac{\ud^2}{\ud x^2}+\frac{\xi^2}{f^2}+\frac{2ff''-f'^2}{4f^2}\Big)F(\cdot,\xi)\Big\|^2_{\mathfrak{h}}\;<\;+\infty\quad\textrm{for almost every }\xi\in\mathbb{R}\,.
\]
The latter formula, owing to \eqref{eq:Afstar}, can be re-written as
\[
 F(\cdot,\xi)\,\in\,\mathcal{D}(A_f(\xi)^*)\quad\textrm{for almost every }\xi\in\mathbb{R}
\]
and since in the present regime $A_f(\xi)^*=\overline{A_f(\xi)}$, we can also write
\[\tag{**}
 F(\cdot,\xi)\,\in\,\mathcal{D}(\overline{A_f(\xi)})\quad\textrm{for almost every }\xi\in\mathbb{R}\,.
\]
Now, (*) and (**) imply that $F\in\mathcal{D}(B_f)$, thus establishing the property \eqref{eq:Hfstar_in_Bf}.

To complete the argument, let us combine the inclusion $B_f\supset \overline{\mathscr{H}_f}$, that follows from \eqref{eq:BextendsHf} and from the closedness of $B_f$, with the inclusion $B_f\subset \overline{\mathscr{H}_f}$, that follows from \eqref{eq:Hfstar_in_Bf} by taking the adjoint, because $\overline{\mathscr{H}_f}=\mathscr{H}_f^{**}\supset B_f^*=B_f$, having used the self-adjointness of $B_f$ valid in the present regime. Since then $\overline{\mathscr{H}_f}=B_f$, the conclusion is that $\mathscr{H}_f$ is essentially self-adjoint.

In this regime there is geometric quantum confinement in the Grushin plane. Theorems \ref{thm:no_confinement} and \ref{thm:generalisation_to_f}(i) are thus proved.


\subsection{Infinite deficiency index}\label{sec:infinite_def_index}~

When $\mathscr{H}_f$ is \emph{not} self-adjoint, necessarily the spaces $\ker(\mathscr{H}_f^*\pm\ii\mathbbm{1})$ are non-trivial. Let us show now that in this case
\begin{equation}\label{eq:infinite_def_space}
 \dim\ker(\mathscr{H}_f^*+\ii\mathbbm{1})\;=\;\dim\ker(\mathscr{H}_f^*-\ii\mathbbm{1})\;=\;\infty\,,
\end{equation}
thus proving Theorem \ref{thm:generalisation_to_f}(iii).

First, since by assumption each $\overline{A_f(\xi)}$ is not self-adjoint, there exists $\varphi_{\xi}\in\mathcal{D}(A_f(\xi)^*)$ with $\|\varphi_{\xi}\|_{\mathfrak{h}}=1$ such that
\begin{equation}\label{eq:eigenf_adjoint}
 A_f(\xi)^*\,\varphi_{\xi}\;=\;\ii \varphi_{\xi}\,.
\end{equation}
From this we shall now deduce that for any compact interval $J\subset\mathbb{R}$, with $\mathbf{1}_J$ characteristic function of $J$, the function $\Phi_J$ defined by $\Phi_J(x,\xi):=\varphi_{\xi}(x)\mathbf{1}_J(\xi)$ satisfies
\begin{equation}\label{eq:Phi_eigenfunction}
 \Phi_J\,\in\,\mathcal{D}(\mathscr{H}_f^*)\,,\qquad \mathscr{H}_f^*\,\Phi_J\;=\;\ii\,\Phi_J\,.
\end{equation}

The fact that $\Phi_J\in\cH$ follows from $\|\Phi_J\|_\cH^2=\int_\mathbb{R}\ud\xi\,\mathbf{1}_J(\xi)\,\|\varphi_{\xi}\|_{\mathfrak{h}}^2=|J|$, where $|J|$ denotes the Lebesgue measure of $J$. Moreover, for any $\psi\in\mathcal{D}(\mathscr{H}_f)$,
\[
 \begin{split}
  \langle\Phi_J,\mathscr{H}_f \psi\rangle_{\cH}\;&=\;\iint_{\mathbb{R}^+\times\mathbb{R}}\ud x\,\ud\xi\,\overline{\varphi_{\xi}(x)}\mathbf{1}_J(\xi)\,A_f(\xi)\psi(x,\xi) \\
  &=\int_J\ud\xi\,\langle \varphi_\xi,A_f(\xi)\psi(\cdot,\xi)\rangle_{\mathfrak{h}}\;=\;\int_J\ud\xi\,\langle A_f(\xi)^*\varphi_\xi,\psi(\cdot,\xi)\rangle_{\mathfrak{h}} \\
  &=\;-\ii\int_J\ud\xi\,\langle \varphi_\xi,\psi(\cdot,\xi)\rangle_{\mathfrak{h}}\;=\;\langle\,\ii\,\Phi_J, \psi\rangle_{\cH}
 \end{split}
\]
where we used \eqref{eq:eigenf_adjoint} in the fourth identity, and this establishes precisely \eqref{eq:Phi_eigenfunction}.

By the arbitrariness of $J$, and the obvious orthogonality $\Phi_J\perp\Phi_{J'}$ whenever $J\cap J'=\varnothing$, we have thus produced an 
infinity of linearly independent eigenfunctions of $\mathscr{H}_f^*$ with eigenvalue $\ii$, and the same can be clearly repeated for the eigenvalue $-\ii$. This completes the proof of \eqref{eq:infinite_def_space}.

\subsection{Comparison with the compact case}\label{sec:additional_remarks}~


We already mentioned that in \cite[Sect.~3.2]{Boscain-Laurent-2013} and in \cite{Boscain-Prandi-JDE-2016} the closely related problem of geometric quantum confinement was solved in the manifold \eqref{eq:MtildeBoscain} -- that we can generalise here to
\begin{equation}
 \widetilde{M}_f\:\equiv\:(\mathbb{R}^+_x\times\mathbb{T}_y,g_f)\,,
\end{equation}
with the usual $g_f$ from \eqref{eq:Mf} and $f$ satisfying assumptions \eqref{eq:assumtpions_f}.

The compactness of $\mathbb{T}$ trivialises the constant-fibre direct integral scheme: the Fourier transform in the $y$ variable naturally makes the conjugate space an infinite orthogonal direct sum of single-Fourier-mode Hilbert spaces, and our \eqref{eq:Hscrf} gets simplified to
\begin{equation}\label{eq:directorthsumcomp}
   \mathscr{H}_f\;=\;\bigoplus_{k\in\mathbb{Z}}\mathscr{H}_f^{(k)}
\end{equation}
with operators
\begin{equation}\label{eq:fibreobcomp}
 \begin{split}
 \mathscr{H}_f^{(k)}\;&=\;-\frac{\ud^2}{\ud x^2}+\frac{k^2}{f^2}+\frac{2ff''-f'^2}{4f^2}
 \end{split}
\end{equation}
in the fibre Hilbert space $\mathfrak{h}=L^2(\mathbb{R}^+,\ud x)$. The continuous variable $\xi$ is thus replaced by the discrete variable $k$.

In this case the study we made in Section \ref{sec:selfadjfibredop} is not needed: indeed, it is a standard exercise that the essential self-adjointness of $\mathscr{H}_f$ is tantamount as the essential self-adjointness of all the $\mathscr{H}_f^{(k)}$'s, and the latter is fully controlled by Weyl's analysis.

It is worth remarking a noticeable difference between the compact and the non-compact case as far as the essential self-adjointness of the minimally defined Laplace-Beltrami operator is concerned, which emerges when there is \emph{no singularity} in the metric $g_f$ at $x=0$ -- for concreteness, when $f(x)=x^{-\alpha}$ with $\alpha<0$.

In the `Grushin cylinder' $M_\alpha=(\mathbb{R}^+_x\times\mathbb{T}_y,g_\alpha)$, as recently determined in \cite[Theorem 1.6]{Boscain-Prandi-JDE-2016}, when one considers generic $\alpha\in\mathbb{R}$ it turns out that
\begin{itemize}
 \item essential self-adjointness holds for $\alpha\in(-\infty,-3]\cup[1,+\infty)$ -- this is seen by studying in the usual way each fibre operator $ \mathscr{H}_\alpha^{(k)}$ (the analogue of \eqref{eq:fibreobcomp}) and then taking the (analogue of the) infinite orthogonal sum \eqref{eq:directorthsumcomp};
 \item in particular, the lack of essential self-adjointness for $\alpha\in(-3,1)$ is due to the Fourier mode $k=0$ only, when $\alpha\in(-3,-1]$, and is due instead to \emph{all} Fourier modes $k\in\mathbb{Z}$ when $\alpha\in(-1,1)$, that is, in the latter case all  $\mathscr{H}_\alpha^{(k)}$'s fail to be essentially self-adjoint on $\mathfrak{h}=L^2(\mathbb{R}^+,\ud x)$.
\end{itemize}

As opposite to that, we can study the same problem in the Grushin plane $M_\alpha=(\mathbb{R}^+_x\times\mathbb{R}_y,g_\alpha)$ also when $\alpha<0$ by virtually repeating almost verbatim the analysis of Sections \ref{sec:preliminaries}, \ref{sec:no_confinement}, and \ref{sec:confimenent}. Concerning the fibre operator $A_\alpha(\xi)$ on $\mathfrak{h}=L^2(\mathbb{R}^+,\ud x)$ we can find that
\begin{equation}\label{eq:summaryhere}
 \begin{split}
  \textrm{if }\alpha \in (-\infty,-1) \cup [1,+\infty), & \quad  \textrm{$ A_\alpha(\xi)$ is ess.~self-adj.~for almost every $\xi \in \mathbb{R}$,} \\
  \textrm{if } \alpha = -1, & \quad  \textrm{$ A_\alpha(\xi)$ is ess.~self-adj.~for $|\xi| \geq 1$,} \\
  \textrm{if } \alpha \in (-1,1), & \quad  \textrm{$ A_\alpha(\xi)$ is not ess.~self-adj.}
 \end{split}
\end{equation}
From \eqref{eq:summaryhere}, taking the direct integral of the $A_\alpha(\xi)$'s, we conclude that
\begin{itemize}
 \item essential self-adjointness holds for $\alpha\in(-\infty,-1)\cup[1,+\infty)$;
 \item the lack of quantum confinement in the complement range $\alpha\in[-1,1)$ is due to a `transmission' through the boundary only by the Fourier modes $\xi\in(-1,1)$ if $\alpha=-1$, and to a transmission by \emph{all} Fourier modes $\xi\in\mathbb{R}$ if $\alpha\in(-1,1)$.
\end{itemize}

This shows a difference between the compact and the non-compact case both in the regimes of essential self-adjointness (when $\alpha \in (-3,-1)$ geometric quantum confinement holds in the Grushin plane and not in the Grushin cylinder) and in the Fourier modes responsible for the transmission.

\section*{Acknowledgements}

We warmly thank U.~Boscain for bringing this problem and the related literature to our attention and for several enlightening discussions on the subject. We also thank for the kind hospitality the Istituto Nazionale di Alta Matematica (INdAM), Rome, where part of this work was carried on. This project is also partially funded by the European Union's Horizon 2020 Research and Innovation Programme under the Marie Sklodowska-Curie grant agreement no.~765267.

%

\def\cprime{$'$}

\end{document}